\def\Left#1#2\Right{\begingroup%
   \def\ts@r{\nulldelimiterspace=0pt \mathsurround=0pt}%
   \let\@hat=#1%
   \def\sht@im{#2}%
   \def\@t{{\mathchoice{\def\@fen{\displaystyle}\k@fel}%
          {\def\@fen{\textstyle}\k@fel}%
          {\def\@fen{\scriptstyle}\k@fel}%
          {\def\@fen{\scriptscriptstyle}\k@fel}}}%
   \def\g@rin{\ts@r\left\@hat\vphantom{\sht@im}\right.}%
   \def\k@fel{\setbox0=\hbox{$\@fen\g@rin$}\hbox{%
      $\@fen \kern.3875\wd0 \copy0 \kern-.3875\wd0%
      \llap{\copy0}\kern.3875\wd0$}}%
      \def\pt@h{\mathopen\@t}\pt@h\sht@im%
      \Right}%
\def\Right#1{\let\@hat=#1%
   \def\st@m{\mathclose\@t}%
   \st@m\endgroup}
\begin{document}

\title[On the structure of groups defined by Kim and Manturov]
{On the structure of groups defined by Kim and Manturov}

\author[C.-F.\ Nyberg-Brodda]{Carl-Fredrik Nyberg-Brodda}
\address{June E. Huh Center for Mathematical Challenges, Korea Institute for Advanced Study, 
85 Hoegi-ro, Dongdaemun-gu, Seoul 02455, 
Republic of Korea}
\email{cfnb@kias.re.kr}
\author[T.\ Sakasai]{Takuya Sakasai}
\address{Graduate School of Mathematical Sciences, 
The University of Tokyo, 
3-8-1 Komaba, 
Meguro-ku, Tokyo, 153-8914, Japan}
\email{sakasai@ms.u-tokyo.ac.jp}
\author[Y.\ Tadokoro]{Yuuki Tadokoro}
\address{Faculty of Science Division II, Department of Mathematics, 
Tokyo University of Science, 1-3 Kagurazaka, Shinjuku-ku, Tokyo 162-8601, Japan}
\email{tado@rs.tus.ac.jp}
\author[K.\ Tanaka]{Kokoro Tanaka}
\address{Department of Mathematics, 
Tokyo Gakugei University, 
4-1-1 Nukuikita-machi, 
Koganei-shi, Tokyo 184-8501, Japan}
\email{kotanaka@u-gakugei.ac.jp}

\date{\today}

\thanks{
The authors were partially supported by KAKENHI 
(No.21H00986, No.24K06751, No.21K03220, No.24K06740), 
%Ministry of Education, Science, Sports and Technology, 
Japan Society for the Promotion of Science, Japan.
}

\subjclass[2000]{Primary~20D15, Secondary~20F14, 57M27}
\keywords{pentagon relation, Coxeter group, Artin group, triangulation, coset enumeration}

% 17B40 Automorphisms, derivations, other operators
% 17B56 Cohomology of Lie (super)algebras
% 17B65 Infinite-dimensional Lie (super)algebras
% 20D15 Finite nilpotent groups, $p$-groups
% 20C30 Representation of finite symmetric groups
% 20F14 Derived series, central series, and generalizations
% 20F28 Automorphism groups of groups
% 20F34 Fundamental groups and their automorphisms
% 20J06 Cohomology of groups
% 32G15 Moduli of Riemann surfaces, Teichmu"ller theory
% 55R40 Homology of classifying spaces, characteristic classes
% 57M27 Invariants of knots and 3-manifolds

\newtheorem{thm}{Theorem}[section]
\newtheorem{prop}[thm]{Proposition}
\newtheorem{lem}[thm]{Lemma}
\newtheorem{cor}[thm]{Corollary}
\theoremstyle{definition}
\newtheorem{definition}[thm]{Definition}
\newtheorem{example}[thm]{Example}
\newtheorem{remark}[thm]{Remark}
\newtheorem{problem}[thm]{Problem}
\newtheorem{conj}[thm]{Conjecture}
\renewcommand{\theproblem}{}

\newcommand{\gen}[1]{\langle #1 \rangle}

\newcommand{\Ker}{\mathop{\mathrm{Ker}}\nolimits}
\newcommand{\Hom}{\mathop{\mathrm{Hom}}\nolimits}
\renewcommand{\Im}{\mathop{\mathrm{Im}}\nolimits}

\newcommand{\Der}{\mathop{\mathrm{Der}}\nolimits}
\newcommand{\Out}{\mathop{\mathrm{Out}}\nolimits}
\newcommand{\Aut}{\mathop{\mathrm{Aut}}\nolimits}
\newcommand{\End}{\mathop{\mathrm{End}}\nolimits}
\newcommand{\Q}{\mathbb{Q}}
\newcommand{\Z}{\mathbb{Z}}
\newcommand{\R}{\mathbb{R}}
\newcommand{\C}{\mathbb{C}}

%%%%%%%%%%%%  For KeTpic  %%%%%%%%%%%%%%%%%
\newlength{\Width}
\newlength{\Height}
\newlength{\Depth}

\begin{abstract}
We study the structure of a series of groups 
$\Gamma_n^4$ defined by Kim and Manturov. We show that the group $\Gamma_n^4$ is 
finite for any $n \ge 6$ and in fact it is a $2$-step nilpotent $2$-group of order $2^{\binom{n}{3}}$. 
\end{abstract}

\renewcommand\baselinestretch{1.1}
\setlength{\baselineskip}{16pt}

\newcounter{fig}
\setcounter{fig}{0}

\maketitle

\section{Introduction}\label{sec:intro}
This is a continuation of the paper \cite{DTS} which studies 
a series of groups $\Gamma_n^4$ defined by 
Kim and Manturov in \cite{KM}. Set $[n]=\{1,2,\ldots, n\}$ for a positive integer $n$.  
For $n \ge 4$, 
the group $\Gamma_n^4$ is defined by the presentation below, where 
$(ijkl)$ is a symbol for an ordered quadruple of distinct integers $i,j,k,l \in [n]$. 
We use the notation $(ijkl)$ instead of $d_{ijkl}$ in the original paper \cite{KM} for visibility. 
\begin{definition}\label{def:gamma}
For $n \ge 4$, the group $\Gamma_n^4$ is defined by the following presentation: 

(Generators) $\mathcal{G}_n:=\{(ijkl) \mid \{i,j,k,l\}\subset [n],\ (i,j,k,l\text{: distinct})\}$

(Relations) There are four types of relations: 
\[\begin{array}{ll}
 (1)&  (ijkl)^2 =1; \\
 (2)&  (ijkl)(stuv)=(stuv)(ijkl),\ (|\{i,j,k,l\} \cap \{s,t,u,v\}| \le 2);\\
 (3)&  (ijkl)(ijlm)(jklm)(ijkm)(iklm)=1, \ (i,j,k,l,m \text{ distinct});\\
 (4)&  (ijkl)=(jkli)=(lkji).
\end{array}\]
\end{definition}
We call the relations (1) the {\it involutive relations}, (2) the {\it commutative relations}, 
(3) the {\it pentagon relations} and (4) the {\it dihedral relations}. 
Specifically, we call (3) for fixed $i,j,k,l,m$ the {\it pentagon relation for $\{i,j,k,l,m\}$}, 
where we respect the order of $i,j,k,l,m$. 

We refer to the paper \cite{KM} and the book \cite[Chapter 15]{M_book} for the geometric 
background of the group $\Gamma_n^4$, where the respective authors consider special kinds of 
configurations of $n$ points in a plane pertaining to Delaunay triangulations of a surface. The group $\Gamma_n^4$ is defined as the {\it target} of invariants for motions of configurations of points, where each of the above generators represents 
a Whitehead move (flip, Pachner move in two dimensional case) 
among Delaunay triangulations. See Figure \ref{fig:geom} for a geometric interpretation of the relations.  
Note, however, that the group $\Gamma_n^4$ itself does not need any 
geometric objects, and will be regarded as an abstract and combinatorial object. 

\begin{figure}[htbp]
\begin{center}
\includegraphics[width=0.8\textwidth]{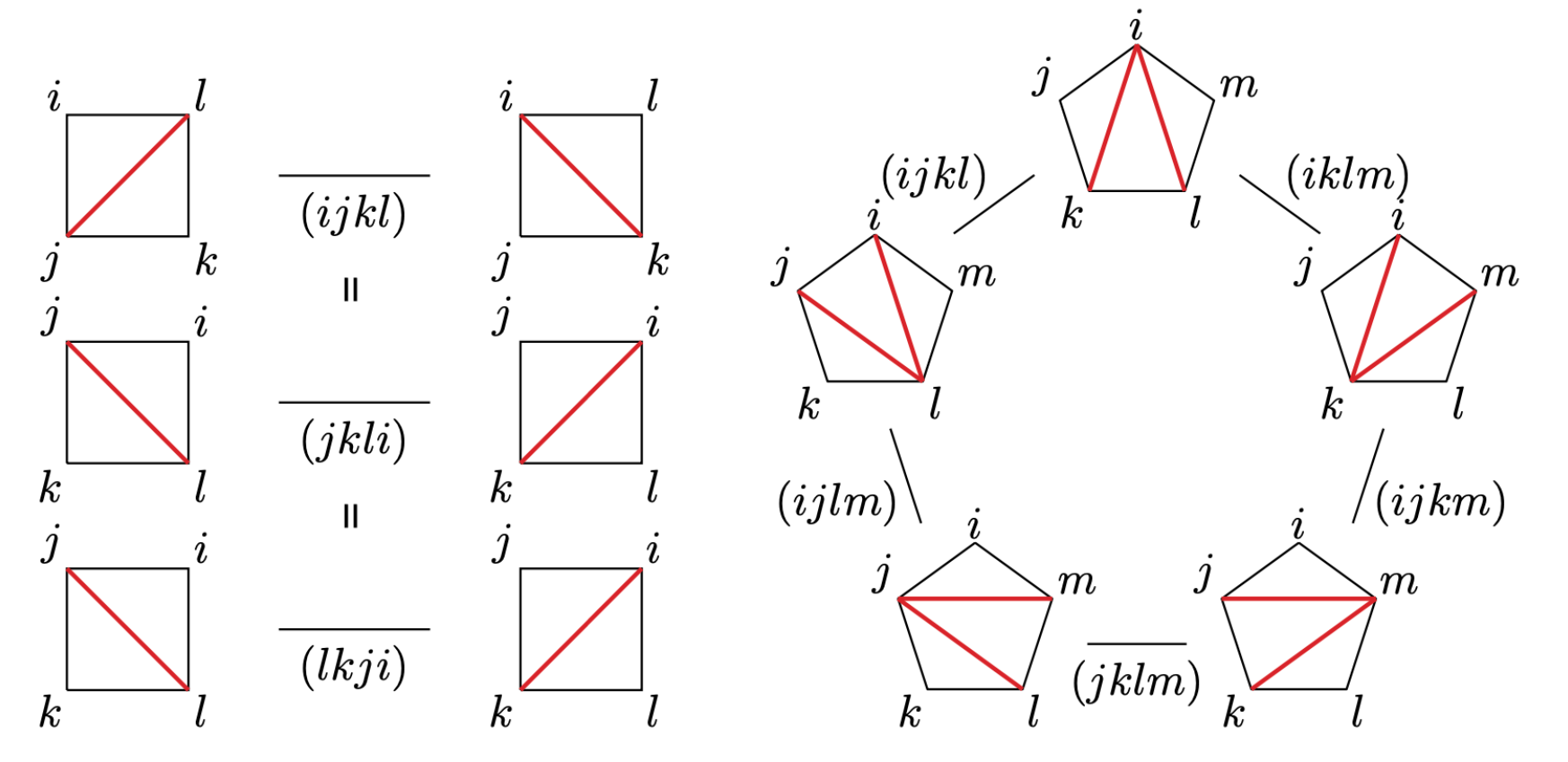}
%\scalebox{1}{\input{figure/20240404_involutive_relation_ver4-1.tex}}\qquad
%\scalebox{1}{\input{figure/20220613_pentagon_relation_ver2.tex}}\\
\caption{Graphical meaning of the relations of $\Gamma_n^4$}
\label{fig:geom}
\end{center}
\end{figure}

In the aforementioned references, potential connections between the group $\Gamma_n^4$ and other geometric objects, 
such as invariants of braids and links, are discussed. To make these connections actual, it seems necessary to first elucidate the structure of $\Gamma_n^4$, but the structure of the group $\Gamma_n^4$ itself has not been sufficiently studied. In our previous paper \cite{DTS}, starting from the presentation, we gave a minimal generating set and determined 
the abelianization $H_1 (\Gamma_n^4)$ of $\Gamma_n^4$ by a purely group-theoretical argument. 
Continuing this line of research, we will show the following in this paper: 
\begin{thm}\label{thm:main}
For any $n \ge 6$, the group $\Gamma_n^4$ is a finite $2$-step nilpotent $2$-group of order $2^{\binom{n}{3}}$. 
In fact, we have a central extension 
\[0 \longrightarrow \mathbb{Z}/2\mathbb{Z} \longrightarrow 
\Gamma_n^4\longrightarrow H_1 (\Gamma_n^4) 
\longrightarrow 1.\]
\end{thm}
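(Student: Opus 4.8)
The plan is to prove the two halves of the order formula separately, treating $|\Gamma_n^4|\le 2^{\binom n3}$ and $|\Gamma_n^4|\ge 2^{\binom n3}$, and to deduce the central extension from the finer fact that the commutator subgroup is a central $\mathbb Z/2$. Throughout I would take as input the computation of the abelianization from \cite{DTS}: since $\Gamma_n^4$ is generated by involutions (relation (1)), $H_1(\Gamma_n^4)$ is an elementary abelian $2$-group, and by \cite{DTS} it has rank $\binom n3-1$, so $|H_1(\Gamma_n^4)|=2^{\binom n3-1}$. The entire theorem then reduces to the single assertion that $\Gamma_n^4$ is nilpotent of class at most $2$ with $[\Gamma_n^4,\Gamma_n^4]\cong\mathbb Z/2$: granting this, the sequence $1\to[\Gamma,\Gamma]\to\Gamma\to H_1\to1$ is a central extension with kernel $\mathbb Z/2$, and multiplicativity of order gives $|\Gamma_n^4|=2\cdot 2^{\binom n3-1}=2^{\binom n3}$.

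For the upper bound I would first localize the commutators. By the commutative relations (2) two generators commute as soon as their index quadruples meet in at most two points, so $[\Gamma_n^4,\Gamma_n^4]$ is generated by commutators $[a,b]$ of generators whose index sets share three or four elements. The core step is to prove that every such commutator is \emph{equal to one and the same central involution} $z$, or is trivial. I would extract $z$ from the pentagon relations (3): writing the pentagon relation for a $5$-set as a product of five involutions and rewriting it with the dihedral relations (4) should yield identities of the form $[(ijkl),(ijkm)]=[(ijkl'),(ijkm')]$, showing that the commutator of two generators overlapping in a triple $\{i,j,k\}$ is independent of the remaining labels; a second family of pentagon manipulations, now varying the shared triple, would identify all of these with a single $z$. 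The hypothesis $n\ge 6$ enters precisely here: to slide one commutator onto another, and above all to check that $z$ commutes with an arbitrary generator, one needs a sixth index available to host the auxiliary pentagon relations. Once $z$ is central and satisfies $z^2=1$ (immediate from (1)), it follows that $[\Gamma_n^4,\Gamma_n^4]=\langle z\rangle$ has order at most $2$ and that $\Gamma_n^4$ has class at most $2$, whence $|\Gamma_n^4|\le 2^{\binom n3}$.

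It remains to show $z\neq1$, which simultaneously gives the lower bound $|\Gamma_n^4|\ge 2^{\binom n3}$ and the non-splitting of the extension. Here I would exhibit an explicit $2$-step nilpotent $2$-group $G$ of order $2^{\binom n3}$ — concretely, a central extension of $H_1(\Gamma_n^4)$ by $\mathbb Z/2$ built from a suitable alternating $\mathbb Z/2$-bilinear form on $H_1$, or equivalently a group of unitriangular matrices over $\mathbb F_2$ — together with a homomorphism $\Gamma_n^4\to G$ defined on generators. One checks that the images satisfy all four families of relations and that the commutator pairing is nondegenerate enough to force the central generator of $G$ into the image, so that $z$ maps to the nontrivial central element and hence $z\neq1$. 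For the smallest case $n=6$, where $2^{\binom63}=2^{20}$ is within computational reach, the order can be pinned down independently by Todd--Coxeter coset enumeration, which both validates the model and anchors the general argument.

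The main obstacle is the heart of the upper bound: the simultaneous proof that \emph{all} nontrivial commutators of generators coincide with the single element $z$ and that $z$ is central. This is a purely combinatorial manipulation of the pentagon and dihedral relations, but it is delicate because (3) is an \emph{ordered} product of five involutions on overlapping quadruples, so commuting them past one another produces further auxiliary commutators that must themselves be controlled; organizing this bookkeeping, and verifying that it closes up only when a sixth label is present, is where the real work lies. A secondary difficulty is producing the explicit model $G$ and verifying the pentagon relations inside it, since that is exactly where the nontriviality of the class-$2$ structure — and hence of the whole theorem — is encoded.
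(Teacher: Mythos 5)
Your proposal is correct and follows essentially the same route as the paper: its Section \ref{sec:finite} proves exactly your core step (all triple-overlap commutators of generators coincide with a single central involution $c$, extracted from pentagon plus dihedral relations and spread around by the $\mathfrak{S}_n$-action, with $n\ge 6$ entering precisely to provide the sixth index), and its Section \ref{sec:nonabelian} realizes your explicit model $G$ as the Heisenberg-type group $(\mathbb{Z}/2\mathbb{Z})\,\widetilde{\times}\,H_1(\Gamma_n^4)$ of unitriangular matrices over $\mathbb{Z}/2\mathbb{Z}$ built from the bilinear form recording triple overlaps, verifying all four families of relations to conclude $r(c)\neq(0,0)$ and hence $c\neq 1$. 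Even your fallback of Todd--Coxeter coset enumeration for $n=6$ appears verbatim as the paper's alternative computer-aided proof in Section \ref{sec:computer}.
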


The group $\Gamma_4^4$ is isomorphic to the free product of 
the three copies of $\mathbb{Z}/2\mathbb{Z}$ generated by 
$\{(1234), (1324), (1243)\}$. 
Also, we proved in \cite{DTS} that $\Gamma_5^4$ is an infinite, non-abelian group not satisfying Property (T). 
Theorem \ref{thm:main} addresses the general case and shows a strong contrast with the previous cases. 
In particular, the natural map $\Gamma_5^4 \to \Gamma_6^4$ is not injective. 
We will give an explicit description for the above central extension through a matrix representation of $\Gamma_n^4$. 

{\it Added\/}: After posting the first version of this paper on arXiv, 
the authors were informed from Professor Manturov that 
in \cite{Styrt} Styrt had previously shown that $\Gamma_n^4$ is a nilpotent finite $2$-group with $4$-torsion 
and that the commutator subgroup $[\Gamma_n^4, \Gamma_n^4]$ is central 
for $n \ge 7$. An upper bound of the order was also given. The present paper  
derives a smaller upper bound for $n \ge 6$ by an independent argument, 
and in fact shows that it gives the precise order. 
 
This paper is organized as follows. 
In Section \ref{sec:DTS1}, we recall a number of facts on the group $\Gamma_n^4$ 
studied in our previous paper. 
The proof of Theorem \ref{thm:main} is given in Sections \ref{sec:finite} and \ref{sec:nonabelian}. 
First, we show that the group $\Gamma_n^4$ is finite in Section \ref{sec:finite}. In fact, 
we narrow down the potential forms of $\Gamma_n^4$ to two candidates.
Then, in Section \ref{sec:nonabelian}, 
we show that the group $\Gamma_n^4$ is not an abelian group by constructing 
an explicit matrix representation, which determines the structure of $\Gamma_n^4$. 
Section \ref{sec:computer} presents another proof of Theorem \ref{thm:main}, 
which is partially aided by a computer. 
Finally, in Section \ref{sec:remarks}, we discuss relevant topics and  
potential future research directions concerning the study of $\Gamma_n^4$ and its relatives.  

Remarks on notations are in order. 
The commutator $[a,b]$ means $a b a^{-1} b^{-1}$. 
The cardinality of a finite set $X$ is denoted by $|X|$. 
We denote by $\mathfrak{S}_n$ the symmetric group of degree $n$. 
The element $(ij) \in \mathfrak{S}_n$ with $i \neq j$ represents the transposition of $i$ and $j$. 
The unit element in a group is denoted by $1$ and the identity matrix of size $k$ is denoted by $I_k$.

\section{A minimal generating set of the group $\Gamma_n^4$}\label{sec:DTS1}
As mentioned in the introduction, some results about the groups $\Gamma_n^4$ were proved in \cite{DTS}, which we now recall.  
\begin{thm}[Minimal generating set {\cite[Theorem 3.1]{DTS}}]\label{thm:generating_gamma}
For $n \ge 4$, the group $\Gamma_n^4$ is generated by the set $\Lambda_n$ consisting of 
\begin{itemize}
\item[(G1)] $(1 2 3 l)$ with $4 \le l \le n$, 

\item[(G2)] $(1 j 2 l)$ with $3 \le j < l \le n$, 

\item[(G3)] $(1 j l k )$ with $2 \le j < k < l \le n$. 
\end{itemize}
The set $\Lambda_n$ has $N_n=\binom{n}{3}-1$ elements, and the group $\Gamma_n^4$ cannot be generated by fewer than $N_n$ elements. 
\end{thm}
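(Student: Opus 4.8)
The plan is to prove the two assertions separately, first that $\Lambda_n$ generates $\Gamma_n^4$ and has exactly $N_n$ elements, and then that no smaller generating set exists, the latter via the abelianization. I would begin by exploiting the dihedral relations (4). Reading $(ijkl)=(jkli)=(lkji)$ as the action of the dihedral group $D_4 \le \mathfrak{S}_4$ on the four positions of a symbol, one checks that this action is free on symbols with distinct entries, so each unordered $4$-subset of $[n]$ contributes exactly $4!/8 = 3$ distinct generators, giving $3\binom{n}{4}$ generators once (4) is imposed. A convenient consequence is that any symbol containing the letter $1$ can be rotated and, if necessary, reversed so as to be written with $1$ in the first position, and that the only residual dihedral freedom is then the swap $(1xyz) = (1zyx)$. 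Each such class is thus recorded by its middle entry $y$ together with the unordered outer pair $\{x,z\}$, and the three families (G1), (G2), (G3) are precisely a choice of one representative in each class of a suitable type.

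For generation I would argue in two stages using the pentagon relations (3). In the first stage, a symbol $(pqrs)$ with $1 \notin \{p,q,r,s\}$ is eliminated by applying the pentagon relation to the $5$-set $\{1\} \cup \{p,q,r,s\}$: exactly one of the five faces omits $1$, so, using the involutive relations (1) to treat every generator as its own inverse, that face is expressed as a product of the four faces containing $1$. In the second stage one reduces the symbols containing $1$ to the canonical forms (G1)--(G3) by repeatedly applying pentagon relations to $5$-sets containing $1$; here I would set up a complexity measure on the classes $(1xyz)$ (for instance the sorted triple $(x,y,z)$ under a suitable order) and check that each application strictly decreases it until a member of $\Lambda_n$ is reached. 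That the counting works out is the Pascal identity $\binom{n-1}{3}+\binom{n-2}{2}+(n-3) = \binom{n}{3}-1$, which follows from $\binom{n}{3} = \binom{n-1}{3} + \binom{n-2}{2} + (n-2)$ and gives $|\Lambda_n| = N_n$.

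For the lower bound I would pass to the abelianization $H_1(\Gamma_n^4)$. Since every generator is an involution by (1), $H_1$ is a vector space over $\mathbb{F}_2$, and any generating set of $\Gamma_n^4$ maps onto a spanning set of $H_1$; hence $\Gamma_n^4$ needs at least $\dim_{\mathbb{F}_2} H_1$ generators. Abelianizing the presentation turns (1) into the $\mathbb{F}_2$-structure, makes (2) vacuous, turns (4) into the identifications already discussed, and turns each pentagon relation into the linear condition that the sum of its five faces vanishes. Thus $H_1 \cong \mathbb{F}_2^{\,3\binom{n}{4}}/P$, where $P$ is the span of the pentagon relations, and the claim $\dim_{\mathbb{F}_2} H_1 = N_n$ is equivalent to $\mathrm{rank}\,P = 3\binom{n}{4}-N_n$. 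Since the generation argument already shows that the $N_n$ classes of $\Lambda_n$ span $H_1$, it suffices to prove that these classes are $\mathbb{F}_2$-linearly independent; I would do this by exhibiting $N_n$ homomorphisms $\Gamma_n^4 \to \Z/2$---equivalently, $0/1$-labellings of the $4$-subset classes for which every pentagon sum is even---whose values on $\Lambda_n$ form an invertible matrix over $\mathbb{F}_2$.

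The main obstacle is the pentagon bookkeeping that underlies both halves: in the generation step, verifying that the proposed complexity measure really is monotone so that the rewriting terminates at $\Lambda_n$ and never escapes it; and in the lower bound, pinning down the rank of the pentagon relations, equivalently producing the explicit family of $\mathbb{F}_2$-characters dual to $\Lambda_n$. The shape of the answer, $\binom{n}{3}-1$ rather than $\binom{n}{3}$, indicates that there is exactly one global linear dependency among the natural $\binom{n}{3}$ invariants, and I expect the cleanest route to the independence statement to be an explicit description of $H_1$ (for instance through a map recording, for each generator, a weighted combination of the triangular faces of its $4$-set) that isolates this single dependency.
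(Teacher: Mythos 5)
Your proposal has the right architecture---pentagon reduction to symbols containing $1$, then a further reduction to $\Lambda_n$, with the lower bound via the $(\mathbb{Z}/2\mathbb{Z})$-abelianization---and several pieces are correct: the dihedral orbit count $3\binom{n}{4}$, the Stage~1 elimination of symbols omitting $1$, the Pascal identity $|\Lambda_n|=N_n$, and the reduction of the lower bound to $\dim H_1(\Gamma_n^4)=N_n$. (For context: the present paper does not prove this theorem at all; it quotes it from \cite{DTS}, and what it imports from there, namely the explicit expressions of Lemma~\ref{lem:bymingen} and the homomorphisms of Theorem~\ref{thm:abelianization}, is precisely the content your sketch defers.) The first genuine gap is Stage~2: rewriting an arbitrary class $(1xyz)$ in terms of $\Lambda_n$ is the core combinatorial content of the generation statement, and ``I would set up a complexity measure \dots\ and check that each application strictly decreases it'' names no measure, runs no induction, and verifies nothing. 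The fifteen-case formulas of Lemma~\ref{lem:bymingen}, with words of length up to sixteen in $\Lambda_n$, indicate how much explicit case analysis this step actually requires. Without it you have neither generation nor the spanning half of your lower-bound argument.

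The second gap is the independence step, and here your hinted method would actually fail. Any invariant ``recording \dots\ the triangular faces of its $4$-set'' depends only on the underlying $4$-subset, hence is constant on the three dihedral classes sharing that subset; but $\Lambda_n$ contains distinct generators with the same underlying $4$-set, e.g.\ $(1j2l)$ from (G2) and $(12lj)$ from (G3), both supported on $\{1,2,j,l\}$. So the evaluation matrix of any family of face-type characters on $\Lambda_n$ has repeated columns and can never be invertible. Your closing heuristic is also false: the $\binom{n}{3}$ face characters span only a $\binom{n-1}{3}$-dimensional subspace of the dual of $H_1(\Gamma_n^4)$ (their span is the image of the simplicial boundary map from $4$-subsets to $3$-subsets over $\mathbb{Z}/2\mathbb{Z}$), so the space of dependencies among them has dimension $\binom{n-1}{2}$, not one. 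The indispensable missing ingredient is an ordering-sensitive invariant that separates classes on a common $4$-subset, such as the diagonal map $\Phi_2^{(2)}((ijkl))=\{i,k\}+\{j,l\}$ of Theorem~\ref{thm:abelianization}; it is the pair $\Phi_3^{(2)}\oplus\Phi_2^{(2)}$ that realizes $H_1(\Gamma_n^4)\cong(\mathbb{Z}/2\mathbb{Z})^{N_n}$ with $\Lambda_n$ as a basis, and your sketch contains no candidate playing that role.
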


We see from the defining presentation of $\Gamma_n^4$ that 
the symmetric group $\mathfrak{S}_n$ of degree $n$ acts on $\Gamma_n^4$ from the left by 
\[\sigma (ijkl):=( \sigma(i)\,\sigma(j)\,\sigma(k)\,\sigma(l) )\]
for $\sigma \in \mathfrak{S}_n$. 

By the dihedral relation, 
each generator $(ijkl) \in \mathcal{G}_n$ is equal to a unique element $(i'j'k'l')$ in the set 
\[\mathcal{G}_n^d:=\{ (a b c d) \mid a=\min \{a,b,c,d\},  b< d\}\]
as an element of $\Gamma_n^4$. 
We call $(i'j'k'l')$ the {\it minimal expression} of $(ijkl)$. 
By definition, we have 
\[\Lambda_n \subset \mathcal{G}_n^d \subset \mathcal{G}_n.\] 

Let $[n]_k$ denote the set of $k$ elements subsets of $[n]$ and 
$(\mathbb{Z}/2\mathbb{Z})[n]_k$ the $(\mathbb{Z}/2\mathbb{Z})$-vector space 
spanned by the set $[n]_k$. 
Consider the homomorphisms 
$\Phi_k^{(2)} \colon \Gamma_n^4 \to (\mathbb{Z}/2\mathbb{Z})[n]_k$ for $k=2,3$ defined by
\begin{align*}
\Phi_3^{(2)} ((ijkl)) &= \{i, j, k\} + \{i, j, l\}+\{i,k,l\} + \{j, k, l\}, \\
\Phi_2^{(2)} ((ijkl)) &= \{i, k\} + \{j, l\}. 
\end{align*}
\noindent
Then we have the following. 
\begin{thm}[Abelianization {\cite[Theorem 3.2]{DTS}}]\label{thm:abelianization}
For $n \ge 4$, the image of the homomorphism 
\[\Phi_3^{(2)} \oplus \Phi_2^{(2)} \colon 
\Gamma_n^4 \longrightarrow (\mathbb{Z}/2\mathbb{Z}) [n]_3 \oplus 
(\mathbb{Z}/2\mathbb{Z}) [n]_2 \cong 
(\mathbb{Z}/2\mathbb{Z})^{\binom{n}{3}+\binom{n}{2}}\]
is isomorphic to $(\mathbb{Z}/2\mathbb{Z})^{N_n}$, which gives the abelianization $H_1 (\Gamma_n^4)$ 
of $\Gamma_n^4$. The minimal generating set $\Lambda_n$ is a basis of $H_1 (\Gamma_n^4)$.  
\end{thm}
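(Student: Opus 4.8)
The plan is to reduce the theorem to a single injectivity statement in linear algebra over $\mathbb{F}_2$. Since every generator is an involution by relation~(1), the abelianization $H_1(\Gamma_n^4)$ is automatically a vector space over $\mathbb{F}_2$, and as $\Lambda_n$ generates $\Gamma_n^4$ (Theorem~\ref{thm:generating_gamma}) its image spans $H_1(\Gamma_n^4)$, so $\dim_{\mathbb{F}_2}H_1(\Gamma_n^4)\le N_n$. First I would verify that $\Phi_3^{(2)}$ and $\Phi_2^{(2)}$ are well defined. Relations~(1) and~(2) cost nothing, since the target has characteristic~$2$ and is abelian; relation~(4) holds because $\Phi_3^{(2)}((ijkl))$ is the full sum of the four $3$-subsets of $\{i,j,k,l\}$ and is thus symmetric, while directly $\Phi_2^{(2)}((ijkl))=\Phi_2^{(2)}((jkli))=\Phi_2^{(2)}((lkji))=\{i,k\}+\{j,l\}$; and relation~(3) follows by noting that in the pentagon product each $3$-subset of $\{i,j,k,l,m\}$, and each pair occurring as a diagonal, appears in exactly two of the five factors, so both components vanish mod~$2$. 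As the target is abelian, $\Phi:=\Phi_3^{(2)}\oplus\Phi_2^{(2)}$ factors through $H_1(\Gamma_n^4)$, with image spanned by the vectors $\Phi(\lambda)$, $\lambda\in\Lambda_n$.

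The theorem then reduces to the linear independence of these $N_n$ vectors: it yields $\dim\Im\Phi\ge N_n$, which with $\dim\Im\Phi\le\dim H_1(\Gamma_n^4)\le N_n$ forces all three numbers to equal $N_n$, so that $H_1(\Gamma_n^4)\cong(\mathbb{Z}/2\mathbb{Z})^{N_n}$ and $\Lambda_n$ is a basis. Writing a putative relation $\sum_\lambda c_\lambda\Phi(\lambda)=0$ with $c_\lambda\in\mathbb{F}_2$, I would treat the two components in turn. The value $\Phi_3^{(2)}(\lambda)$ depends only on the underlying $4$-set $S$ of $\lambda$, being its mod-$2$ boundary $\partial S=\sum_{T\subset S,\,|T|=3}T$, and every $\lambda\in\Lambda_n$ has $1\in S$. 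The $\binom{n-1}{3}$ boundaries $\partial S$ with $1\in S$ are linearly independent, since the part of $\partial\{1,a,b,c\}$ supported on triples avoiding $1$ is exactly $\{a,b,c\}$ and distinct $4$-sets give distinct triples. Hence $\sum_\lambda c_\lambda\Phi_3^{(2)}(\lambda)=0$ forces, for each $4$-set $S$, that the coefficients of the generators sharing $S$ sum to $0$. A short count shows that a $4$-set $S\ni 1$ carries $3$, $2$, or $1$ elements of $\Lambda_n$ according to whether $S\supseteq\{2,3\}$, $S\cap\{2,3\}=\{2\}$, or $2\notin S$, with $n-3$, $\binom{n-3}{2}$, and $\binom{n-2}{3}$ sets of each type; thus the solution space $K$ of the first equation has dimension $N_n-\binom{n-1}{3}=\binom{n-1}{2}-1$, with explicit basis the differences of generators sharing a common $4$-set.

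The hard part is to show that $\Phi_2^{(2)}$ is injective on $K$, as this is precisely what supplies the remaining $\binom{n-1}{2}-1$ dimensions. I would do this by a ``private pair'' argument: evaluating $\Phi_2^{(2)}$ on the basis of $K$, each basis vector contains a $2$-subset occurring in no other. Concretely, the combination attached to a two-generator set $\{1,2,b,c\}$ (with $4\le b<c$) contains the pair $\{b,c\}$, whose two entries are both $\ge 4$ and which appears in none of the other images; and once these coefficients are seen to vanish, the two combinations attached to each three-generator set $\{1,2,3,c\}$ contain the private pairs $\{2,c\}$ and $\{1,c\}$ (with $c\ge 4$). Reading off the coefficients of these distinguished pairs forces every $c_\lambda$ to be $0$, establishing injectivity of $\Phi_2^{(2)}$ on $K$ and hence of $\Phi$. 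The only real subtlety I anticipate is the bookkeeping of the previous paragraph, namely checking that each $4$-set containing $1$ carries exactly the stated number of elements of $\Lambda_n$ and that the counts assemble to $N_n$ and to $\binom{n-1}{2}-1$; the remaining work is the routine mod-$2$ linear algebra just outlined.
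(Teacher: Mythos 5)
Your proposal is correct, and there is nothing in this paper to compare it against at the level of detail: Theorem~\ref{thm:abelianization} is imported from \cite[Theorem 3.2]{DTS} without proof, and your argument is exactly the one the statement of the theorem suggests (well-definedness of $\Phi_3^{(2)}\oplus\Phi_2^{(2)}$ on the presentation, the upper bound $\dim_{\mathbb{F}_2} H_1(\Gamma_n^4)\le N_n$ from Theorem~\ref{thm:generating_gamma}, and $\mathbb{F}_2$-independence of the $N_n$ images of $\Lambda_n$). I checked the computational pivots: each pentagon relation dies in both components (every triple lies in exactly two of the five $4$-sets, and each of the five diagonal pairs $\{i,k\},\{j,l\},\{i,l\},\{j,m\},\{k,m\}$ occurs in exactly two of the five images); the class sizes and counts are right, with $3(n-3)+2\binom{n-3}{2}+\binom{n-2}{3}=N_n$ and $\dim K=N_n-\binom{n-1}{3}=\binom{n-1}{2}-1$; and the private-pair extraction goes through in the order you prescribe ($\delta$-coefficients first via $\{b,c\}$, then $\{2,c\}$ and $\{1,c\}$ pick out two of the three coefficients on each class $\{1,2,3,c\}$, the third vanishing by the class constraint). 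One point worth making explicit in a write-up: the pair $\{b,c\}$ with $4\le b<c$ is \emph{not} private among the $\Phi_2^{(2)}$-images of all of $\Lambda_n$ --- it also occurs in $\Phi_2^{(2)}((1blc))=\{1,l\}+\{b,c\}$ for every $l>c$ --- but those (G3) generators lie in singleton $4$-set classes, so their coefficients are forced to zero already by the $\Phi_3^{(2)}$-equations and they drop out of $K$; your phrasing ``on the basis of $K$'' is the correct formulation, and the argument stands.
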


\section{The group $\Gamma_n^4$ is finite for $n \ge 6$}\label{sec:finite}

We show that the group $\Gamma_n^4$ is finite for $n \ge 6$. To do this, we first define 
\[\overline{\Gamma_n^4}= (\Gamma_n^4 \ast \langle c \rangle)/
\Left< c = [(ijkl),(stuv)] \mid |\{i,j,k,l\} \cap \{s,t,u,v\}| =3 \Right> ,\]
where $\Left< \cdot \Right>$ means the normal closure. That is, 
we collapse all the commutators $[(ijkl),(stuv)]$ with $|\{i,j,k,l\} \cap \{s,t,u,v\}| =3$ into a single element $c$. 
The natural map $\Gamma_n^4 \to \overline{\Gamma_n^4}$ is surjective.  
The element $c$ is a unit or of order $2$ in $\overline{\Gamma_n^4}$ 
because $c=[(1234),(1235)]=[(1235),(1234)]^{-1}=c^{-1} \in \overline{\Gamma_n^4}$. 
In fact, the latter holds and its proof will be given in the next section. 
The action of $\mathfrak{S}_n$ on $\Gamma_n^4$ induces that on $\overline{\Gamma_n^4}$, which 
fixes $c \in \overline{\Gamma_n^4}$. Now the commutative relation says that $[c, (3456)]=[[(1234),(1235)],(3456)]=1$. 
The action of $\mathfrak{S}_n$ on $\Gamma_n^4$ shows that $c$ is in the center of $\overline{\Gamma_n^4}$. Hence, assuming that 
$c \neq 1 \in \overline{\Gamma_n^4}$, we have a central extension 
\[0 \longrightarrow \mathbb{Z}/2\mathbb{Z} \longrightarrow 
\overline{\Gamma_n^4} \longrightarrow H_1 (\overline{\Gamma_n^4}) 
\longrightarrow 1, \]
where the generator of $\mathbb{Z}/2\mathbb{Z} = [\overline{\Gamma_n^4}, \overline{\Gamma_n^4}]$ 
is $c$. Since the surjection $\Gamma_n^4 \to \overline{\Gamma_n^4}$ induces an isomorphism on the abelianization, 
we see that 
\[H_1 (\overline{\Gamma_n^4}) \cong H_1 (\Gamma_n^4) \cong (\mathbb{Z}/2\mathbb{Z})^{N_n}\]
and that $\overline{\Gamma_n^4}$ is a finite $2$-step nilpotent $2$-group of order $2^{N_n+1}=2^{\binom{n}{3}}$. 
The finiteness of $\Gamma_n^4$ is derived from the following. 
\begin{prop}\label{prop:finitequotient}
Let $n \ge 6$. If $|\{i,j,k,l\} \cap \{s,t,u,v\}| =3$, we have 
\[[(ijkl), (stuv)]=[(1234), (1235)] \in \Gamma_n^4.\]
Therefore the natural surjection $\Gamma_n^4 \twoheadrightarrow \overline{\Gamma_n^4}$ is an isomorphism.
\end{prop}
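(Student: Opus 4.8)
The final clause is immediate once the displayed identity is available. Indeed, if every commutator $[(ijkl),(stuv)]$ with $|\{i,j,k,l\}\cap\{s,t,u,v\}|=3$ already equals the single element $c_0:=[(1234),(1235)]\in\Gamma_n^4$, then in the construction of $\overline{\Gamma_n^4}=(\Gamma_n^4\ast\langle c\rangle)/(\text{normal closure})$ each imposed relation $c=[(ijkl),(stuv)]$ reads simply $c=c_0$, so the adjoined generator $c$ is redundant and the surjection $\Gamma_n^4\to\overline{\Gamma_n^4}$ is an isomorphism. So the plan is to concentrate on the identity itself. The main device is $\mathfrak{S}_n$-equivariance: since each $\sigma\in\mathfrak{S}_n$ acts by an automorphism, applying $\sigma$ to any established identity $u=v$ yields $\sigma(u)=\sigma(v)$, so identities propagate along $\mathfrak{S}_n$-orbits. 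Because each generator is an involution and, by the dihedral relations, $(ijkl)$ depends only on its underlying $4$-subset together with one of its three cyclic classes, a $3$-intersection commutator is determined by an ordered pair of $4$-subsets meeting in $3$ points (hence with $5$-element union) and a cyclic class for each. I therefore split the proof into two parts: (I) every such commutator whose union is $\{1,2,3,4,5\}$ equals $c_0$; and (II) $c_0$ is fixed by the whole $\mathfrak{S}_n$-action. Granting both, transitivity of $\mathfrak{S}_n$ on $5$-element subsets finishes the argument, since an arbitrary $3$-intersection pair has its union carried to $\{1,2,3,4,5\}$ by some $\sigma$, and $\sigma^{-1}(c_0)=c_0$ by (II) reduces the general identity to (I).

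For part (I), after using the $\mathfrak{S}_5\subset\mathfrak{S}_n$ piece of (II) to move among ordered subset-pairs inside $\{1,2,3,4,5\}$, it remains to treat the finitely many combinations of cyclic classes on the fixed pair $(\{1,2,3,4\},\{1,2,3,5\})$. Here the only substantial relations are the pentagon relations for $\{1,2,3,4,5\}$, the commutative relations being vacuous since all $4$-subsets of a $5$-set meet in $3$ points. The idea is to compare the pentagon relations for different orderings of the same $5$-set: rewriting each factor into its minimal expression via the dihedral relations identifies the generators the two relations share, and since all five factors are involutions one can eliminate a common factor to obtain length-four involution identities among the remaining generators. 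Expanding these should yield the required equalities among the commutators and exhaust the remaining cyclic-class combinations.

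Part (II) is where $n\ge 6$ is essential, and I expect it to be the main obstacle. It suffices to verify invariance of $c_0$ under a generating set of transpositions, each realized inside a six-element subset; those fixing the shared triple $\{1,2,3\}$ setwise are handled by the five-element analysis above, while the genuinely new cases move a non-shared symbol, say $(46)$ and $(56)$. For the latter I would bring in the sixth symbol and use the pentagon relation for $\{1,2,3,5,6\}$, namely $(1235)(1256)(2356)(1236)(1356)=1$, in which $(1256)$, $(2356)$ and $(1356)$ meet $\{1,2,3,4\}$ in two points and so commute with $(1234)$, whereas $(1235)$ and $(1236)$ do not. Solving the pentagon for $(1236)$ in terms of $(1235)$ and the three commuting factors, substituting into $[(1234),(1236)]$, and pushing $(1234)$ through the commuting factors should collapse $\sigma(c_0)$ back to $c_0$. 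The delicate point is that these same three factors do \emph{not} commute with $(1235)$, so the computation is genuinely two-sided and must be organized with care; this is exactly the mechanism that is unavailable with only five symbols, consistent with $\Gamma_5^4$ being infinite. Finally, invariance under $(46)$ and $(56)$ forces invariance under $(45)=(46)(56)(46)$, and $(45)$-invariance is precisely the relation $c_0=c_0^{-1}$; thus the fact that $c_0$ has order dividing $2$ falls out of the same six-element computation rather than requiring a separate argument.
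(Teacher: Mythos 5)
Your reduction of the final clause to the displayed identity is correct, and so is the global scheme: prove the identity for pairs whose union is $\{1,2,3,4,5\}$ (part (I)), prove $\mathfrak{S}_n$-invariance of $c_0=[(1234),(1235)]$ (part (II)), and finish by transitivity of $\mathfrak{S}_n$ on $5$-element subsets. The fatal gap is part (I). You assert that, once the pair of $4$-subsets is fixed, the remaining comparisons of cyclic classes can be carried out using only the pentagon relations for $\{1,2,3,4,5\}$, the commutative relations being vacuous there. But every identity derivable from the involutive, dihedral and pentagon relations supported on $\{1,2,3,4,5\}$, together with $\mathfrak{S}_5$-equivariance, already holds in $\Gamma_5^4$. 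If part (I) were provable by such means, then in $\Gamma_5^4$ all commutators of generators lying on distinct $4$-subsets would equal one element $c_0$; since $[y,x]=[x,y]^{-1}$ is again such a commutator, $c_0^2=1$ would follow, and since $x[x,y]x^{-1}=[y,x]$ for involutions and every generator occurs in such a commutator, $c_0$ would be central. The paper's proof of Lemma \ref{lem:comm4} would then run verbatim inside $\Gamma_5^4$ (it uses only the pentagon relation for $\{1,2,4,3,5\}$ and the equality, centrality and $2$-torsion of the commutators denoted $c_{234},c_{124},c_{134},c_{123}$), so generators on the same $4$-subset would commute as well. Consequently $\Gamma_5^4$ would be $2$-step nilpotent with $[\Gamma_5^4,\Gamma_5^4]\subseteq\langle c_0\rangle$, hence of order at most $2^{10}$ by Theorem \ref{thm:abelianization} --- contradicting the fact, recalled in the paper, that $\Gamma_5^4$ is infinite (and this is exactly why $\Gamma_5^4\to\Gamma_6^4$ fails to be injective). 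So the sixth symbol is not needed merely for your ``genuinely new'' transpositions $(46)$, $(56)$: it is indispensable already for comparing cyclic classes on the fixed pair $(\{1,2,3,4\},\{1,2,3,5\})$. This is precisely how the paper proceeds: its key Lemma \ref{lem:importantrel} requires six distinct indices, and the whole chain of equalities $(a_1)$--$(a_{13})$ in the proof of Lemma \ref{lem:comm3} rests on that lemma, with intermediate generators containing the letter $6$.

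A second, more repairable, defect lies in your $(56)$-computation in part (II). Solving the pentagon for $\{1,2,3,5,6\}$ as $(1236)=(2356)(1256)(1235)(1356)$ and pushing $(1234)$ through the three factors that commute with it yields
\[
[(1234),(1236)] \;=\; g\,c_0\,g^{-1}, \qquad g=(2356)(1256),
\]
that is, conjugacy to $c_0$ rather than equality; to remove the conjugation you would need $c_0$ to commute with $(1256)$ and $(2356)$, which is unavailable at that stage. The paper's Lemma \ref{lem:importantrel} assembles the same ingredients so that no conjugation is left over: the pentagon shows that the product $(ijkl)(iklm)$ commutes with $(iklp)$, and inserting $(iklp)^2=1$ into $\bigl((ijkl)(iklm)\bigr)^2=1$ gives the clean identity $[(ijkl),(iklp)]\,[(iklp),(iklm)]=1$. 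Finally, note that your organization is circular: part (I) invokes the $\mathfrak{S}_5$-piece of part (II), while exactly that piece of (II) is ``handled by the five-element analysis'' of part (I). Even setting the circularity aside, the obstruction above shows part (I) cannot be closed with five-symbol relations, so the proposal does not prove the proposition.
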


To prove this proposition, we need a series of lemmas on equalities in $\Gamma^4_n$. 

\begin{lem}\label{lem:importantrel}
Let $i,j,k,l,m,p$ be distinct integers in $[n]$ with $n \ge 6$. 
The equalities
\begin{align*}
(1) & \quad [(iklp),(iklm)] = [(iklp),(ijkl)]\\
(2) & \quad [(ikpl),(iklm)] = [(ikpl),(ijkl)]
\end{align*}
\noindent
hold.
\end{lem}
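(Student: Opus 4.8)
The plan is to obtain both equalities from a single source — the pentagon relation for $\{i,j,k,l,m\}$ — by conjugating it by a generator supported on the four-element set $\{i,k,l,p\}$, and exploiting that all but two of the pentagon's factors commute with this conjugating element. The key realisation is that the commutative relation (2) sees only the underlying four-element sets, not their orderings, so the conjugation behaves identically whether the conjugating generator is $(iklp)$ or $(ikpl)$; this is exactly why parts (1) and (2) will fall out of one computation.

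Concretely I would begin from
\[(ijkl)(ijlm)(jklm)(ijkm)(iklm)=1\]
and abbreviate $A=(ijkl)$, $B=(ijlm)$, $C=(jklm)$, $D=(ijkm)$, $E=(iklm)$, so that $ABCDE=1$, with every letter an involution by relation (1). Set $P=(iklp)$ for part (1) and $P=(ikpl)$ for part (2); in both cases $P$ is supported on $\{i,k,l,p\}$. The decisive bookkeeping step is to verify the three intersections $|\{i,j,l,m\}\cap\{i,k,l,p\}|=|\{j,k,l,m\}\cap\{i,k,l,p\}|=|\{i,j,k,m\}\cap\{i,k,l,p\}|=2$, so that $P$ commutes with $B$, $C$, $D$ by relation (2), whereas $P$ meets the underlying sets of both $A$ and $E$ in the full triple $\{i,k,l\}$.

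I would then conjugate the relation by $P$. Since $P^2=1$ the conjugation telescopes,
\[P(ABCDE)P=(PAP)(PBP)(PCP)(PDP)(PEP),\]
and the three commutations collapse the middle factors to give $(PAP)\,BCD\,(PEP)=1$. Reading off $BCD=A^{-1}E^{-1}=AE$ from $ABCDE=1$ and substituting yields $(PAP)A\,E(PEP)=1$. Recognising $(PAP)A=PAPA=[P,A]$ and $E(PEP)=EPEP=[E,P]$, this is precisely $[P,A][E,P]=1$, hence $[P,A]=[E,P]^{-1}=[P,E]$. Unwinding the abbreviations gives $[(iklp),(ijkl)]=[(iklp),(iklm)]$ for $P=(iklp)$, which is (1), and the same chain with $P=(ikpl)$ gives (2).

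I do not expect a genuine obstacle here; the content is a short manipulation, and the two parts are literally the same computation because the ordering of the conjugating generator is invisible to relation (2). The only points demanding care — rather than difficulty — are the consistent use of involutivity (so that every commutator has the form $xyxy$ and the telescoping $P\cdots P\cdot P\cdots P$ truly cancels) and the confirmation of the three size-$2$ intersections, which is what guarantees that exactly the first and last factors of the pentagon survive conjugation.
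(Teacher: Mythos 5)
Your proof is correct and follows essentially the same route as the paper: both rest on rearranging the pentagon relation for $\{i,j,k,l,m\}$ into $(ijlm)(jklm)(ijkm)=(ijkl)(iklm)$, observing that $(iklp)$ (or $(ikpl)$) commutes with the three factors on the left by the size-$2$ intersections, and then using involutivity to convert the resulting commutation of $(ijkl)(iklm)$ with $P$ into the stated equality of commutators. Your conjugation-by-$P$ formulation and the paper's insertion of $P^2=1$ into $1=(ijkl)^2(iklm)^2$ are the same computation, and both treat part (2) by the identical word-for-word substitution.
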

\begin{proof}
By the pentagon relation for $\{i,j,k,l,m\}$, we have 
\[(ijlm)(jklm)(ijkm)=(ijkl)(iklm).\]
Since $(iklp)$ commutes with each of $(ijlm), (jklm), (ijkm)$, we see that 
$(ijkl)(iklm)$ commutes with $(iklp)$ . Then 
\begin{align*}
1 &= (ijkl)(ijkl)(iklm)(iklm)=(ijkl) (iklp) \underline{(iklp)(ijkl)(iklm)}(iklm)\\
&=(ijkl) (iklp) \underline{(ijkl)(iklm) (iklp)} (iklm)
=[(ijkl), (iklp)] [(iklp), (iklm)], 
\end{align*}
which proves the equality (1).  
Since $(ijkl)(iklm)$ also commutes with $(ikpl)$, we may apply the above argument 
words by words after replacing $(iklp)$ by $(ikpl)$ to show the equality (2). 
\end{proof}

\begin{lem}\label{lem:comm3}
If $|\{i,j,k,l\} \cap \{s,t,u,v\}| =3$, say $\{i,j,k,l\} \cap \{s,t,u,v\}=\{x,y,z\}$, then 
the commutator  
$[(ijkl), (stuv)]$ depends only on the set $\{x,y,z\}$. 
Hence the element 
\[c_{xyz}:=[(ijkl), (stuv)]\]
is well-defined and $c_{xyz}^2=1$ holds. 
\end{lem}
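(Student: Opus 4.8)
The plan is to fix the two generators as $g_1$ and $g_2$ with supports $\{x,y,z,a\}$ and $\{x,y,z,b\}$, where $a,b\notin\{x,y,z\}$ and $a\neq b$, and to prove that $[g_1,g_2]$ is unaffected by the remaining choices, namely the two ``extra'' elements $a,b$ and the dihedral orderings of $g_1$ and $g_2$. For bookkeeping I recall that, by the dihedral relations, each $4$-element set carries exactly three generators, separated by $\Phi_2^{(2)}$: writing $(pqrs)$ for its pair of ``diagonals'' $\{p,r\}$ and $\{q,s\}$, the three generators on $\{x,y,z,a\}$ correspond to the anchor ($x$, $y$, or $z$) with which $a$ is paired. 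The whole argument runs on Lemma \ref{lem:importantrel}, read as a supply of relations among such commutators.

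First I would fix $g_1$ and move $g_2$. For $g_1=(iklp)$, Lemma \ref{lem:importantrel}(1) reads $[g_1,(iklm)]=[g_1,(ijkl)]$; here $(iklm)$ is the class in which the extra element is paired with $k$, while $(ijkl)$ is the class in which it is paired with $l$, and the identity holds for every admissible value of the two free labels $m,j$. Since the right-hand side does not involve $m$, this at once shows that $[g_1,g_2]$ is independent of the extra element of $g_2$, and it links two of the three dihedral classes of $g_2$. An application consumes one label outside $\{x,y,z,a,b\}$ (the label $j$), which exists precisely because $n\ge 6$. The third class of $g_2$, and the freedom to alter $g_1$ itself, are then reached by combining part (1) with part (2) of Lemma \ref{lem:importantrel} (which treats a second dihedral class of $g_1$), with relabellings by permutations in $\mathfrak{S}_n$ fixing $\{x,y,z\}$ setwise --- these transport an established identity of group elements to another one with the same shared set --- and with the role-swap $[g_1,g_2]=[g_2,g_1]^{-1}$, which lets me apply the same moves to the first generator.

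Assembling these moves, I would show that any two admissible pairs $(g_1,g_2)$ with intersection $\{x,y,z\}$ are linked by a chain of such relations, so that $[g_1,g_2]$ takes a single value $c_{xyz}$ depending only on $\{x,y,z\}$. Once this is known, $c_{xyz}^2=1$ is immediate: interchanging $g_1$ and $g_2$ merely swaps $a$ and $b$ and keeps the shared set, so $c_{xyz}=[g_1,g_2]=[g_2,g_1]=[g_1,g_2]^{-1}=c_{xyz}^{-1}$.

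The step I expect to be the main obstacle is the connectivity just invoked. Lemma \ref{lem:importantrel} changes the extra element and the diagonal class of a generator simultaneously, so separating these effects forces one to interleave its two parts with the dihedral relations, the $\mathfrak{S}_n$-relabellings and the role-swap, all while keeping the six labels in each application of the lemma distinct. That distinctness is exactly what $n\ge 6$ secures, and it is tight: when $n=6$ only a single label lies outside $\{x,y,z,a,b\}$, so the chain of relations must be arranged economically to reuse it, and this boundary case is where I would expect to spend the most care.
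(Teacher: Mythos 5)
Your framework is the same as the paper's: classify the three dihedral classes on each $4$-set by which element of $\{x,y,z\}$ is diagonal to the extra letter, read Lemma \ref{lem:importantrel} (both parts) as relations among such commutators, and combine with $\mathfrak{S}_n$-relabellings fixing $\{x,y,z\}$, the dihedral relations, and the swap $[g_1,g_2]=[g_2,g_1]^{-1}$. But your proposal stops exactly where the proof has to start: the connectivity of all admissible ordered pairs under these moves is asserted and explicitly deferred as ``the main obstacle,'' not carried out. That connectivity is the entire content of the paper's argument --- an explicit chain of equalities (a$_{1}$)--(a$_{13}$) obtained by interleaving Lemma \ref{lem:importantrel}(1) and (2) with carefully chosen permutations --- and it is not routine. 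Indeed, your one concrete step, that $[g_1,(iklm)]=[g_1,(ijkl)]$ ``at once'' gives independence of the extra element of $g_2$, requires a label $j$ outside $\{i,k,l,p,m,m'\}$ for two extras $m\neq m'$, hence $n\ge 7$; at $n=6$, the minimal case the lemma must cover, no such $j$ exists and the independence has to be extracted from longer chains, which you acknowledge but do not produce.

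There is also a logical-order problem in how you obtain $c_{xyz}^2=1$. You plan to prove well-definedness first and then deduce $c_{xyz}^2=1$ from $[g_1,g_2]=[g_2,g_1]$. However, one of your moves is the role-swap, which is an inversion, not an equality: any chain using it an odd number of times proves only $[A,B]=[C,D]^{-1}$. So to conclude that all these commutators take a \emph{single} value you must either exhibit an inversion-free chain from some pair to its transpose, or already know $c_{xyz}^2=1$ --- which is what you are trying to deduce. The paper resolves this by first deriving two inversion-free chains, (a$_{1}$): $[(1234),(1235)]=[(1325),(1326)]$ and (a$_{5}$): $[(1234),(1235)]=[(1326),(1325)]$, which together force $c_{xyz}^2=1$; only after that can the swap be used freely to cover the remaining ordered cases (for instance $[(12k3),(123l)]$, which is reached only as the transpose of (a$_{10}$)). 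Your plan can be repaired along the same lines --- use the swap only in cancelling pairs to move the first slot, and exhibit one inversion-free path from a pair to its transpose --- but as written, the deduction of $c_{xyz}^2=1$ presupposes a stronger form of well-definedness than your chains, as described, deliver.
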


\begin{proof}
We consider the case where $\{i,j,k,l\} \cap \{s,t,u,v\}=\{1,2,3\}$. 
The general case is derived by considering the action of $\mathfrak{S}_n$. 
Note that up to the dihedral relation, the generators in $\mathcal{G}_n$ 
having $\{1,2,3\}$ are $(123k), (12k3), (132k)$ for $4 \le k \le n$. 

We use the equalities in Lemma \ref{lem:importantrel}. Applying (1) for $(i,j,k,l,m,p)=(1,6,2,3,5,4)$ and 
$(i,j,k,l,m,p)=(1,4,3,2,5,6)$ 
we have 
\begin{align*}
 [(1234), (1235)]&=[(1234), (1623)]=[(1234), (1326)], \\
 [(1326), (1325)]&=[(1326), (1432)]=[(1326), (1234)]. 
 \end{align*}
\noindent
From them, we have 
\begin{equation}
[(1234), (1235)]=[(1325), (1326)]. \label{a1} \tag{a$_{1}$}
\end{equation} 
Then we have 
\[[(1234), (1235)]=[(3214),(3215)] = [(3214), (3621)]
=[(1234), (1263)]. \label{a2} \tag{a$_{2}$}\]
by applying (1)  for $(i,j,k,l,m,p)=(3,6,2,1,5,4)$ to the second equality. 
Also, (2) for $(i,j,k,l,m,p)=(2,5,1,3,6,4)$ gives 
\[[(1234), (1263)]=[(2143),(2136)] = [(2143), (2513)]
=[(1234), (1325)]. \label{a3} \tag{a$_{3}$}\]
and (1) for $(i,j,k,l,m,p)=(1,4,3,2,6,5)$ gives 
\[[(1325), (1326)]=[(1325),(1432)] = [(1325), (1234)]. \label{a4} \tag{a$_{4}$}\]
From \eqref{a2}, \eqref{a3} and \eqref{a4}, we see that 
\[[(1234), (1235)]=[(1326), (1325)]. \label{a5} \tag{a$_{5}$} \]
Now the equalities \eqref{a1} and \eqref{a5} imply that 
\[[(1325), (1326)]=[(1234), (1235)]=[(1326), (1325)]=[(1325), (1326)]^{-1}.\]
Hence 
\[[(1234), (1235)]^2=[(1325), (1326)]^2=1.\]
Applying $(56) \in \mathfrak{S}_n$ to \eqref{a5}, we have 
\[[(1234), (1236)]=[(1325), (1326)]. \]
Combining with \eqref{a5}, we see that 
\[[(1234), (1235)]=[(1234), (1236)]. \label{a6} \tag{a$_{6}$}\]

For distinct $k, l \in \{7,8,\ldots, n\}$, applying $(6l), 
(6k)(45)(4l), (45), (6l)(45), (46), (5k)(46) \in \mathfrak{S}_n$ 
to \eqref{a6} gives
\begin{align*}
[(1234), (1235)]&=[(1234), (123l)],\\
[(123l), (1234)]&=[(123l), (123k)],\\
[(1235), (1234)]&=[(1235), (1236)],\\
[(1235), (1234)]&=[(1235), (123l)],\\
[(1236), (1235)]&=[(1236), (1234)],\\
[(1236), (123k)]&=[(1236), (1234)].
\end{align*}
\noindent
From these equalities, we see that 
\[[(1234), (1235)]=[(123k), (123l)]. \label{a7} \tag{a$_{7}$}\]
holds for all distinct $k, l \in \{4,5,\ldots, n\}$. 

From \eqref{a2} and \eqref{a3}, we have
\[[(1234), (1235)]=[(1234), (1325)]. \label{a8} \tag{a$_{8}$}\]
For $5 \le k < l \le n$, we apply $(5l), (4k)(5l) \in \mathfrak{S}_n$ 
to \eqref{a8} gives
\begin{align*}
[(1234), (123l)]&=[(1234), (132l)],\\
[(123k), (123l)]&=[(123k), (132l)].
\end{align*}
Considering \eqref{a8}, we see that 
\[[(1234), (1235)]=[(123k), (132l)]. \label{a9} \tag{a$_{9}$}\]
holds for all distinct $k, l \in \{4,5,\ldots, n\}$. 

Applying $(13) \in \mathfrak{S}_n$ to \eqref{a8} gives 
\[[(1234), (1235)]=[(3214), (3215)]=[(3214), (3125)]=[(1234), (1253)].\]
By a similar argument, we see that 
\[[(1234), (1235)]=[(123k), (12l3)] \label{a10} \tag{a$_{10}$}\]
holds for all distinct $k, l \in \{4,5,\ldots, n\}$. 

Applying $(23) \in \mathfrak{S}_n$ to \eqref{a9} gives 
\[[(1324), (1325)]=[(132k), (123l)],\]
where the right hand side is equal to $[(1234), (1235)]$ by \eqref{a9}. 
Hence 
\[[(1234), (1235)]=[(1324), (1325)].\] 
By a similar argument, we see that 
\[[(1234), (1235)]=[(132k), (132l)] \label{a11} \tag{a$_{11}$}\]
holds for all distinct $k, l \in \{4,5,\ldots, n\}$. 

Applying $(23) \in \mathfrak{S}_n$ to \eqref{a10} gives 
\[[(1324), (1325)]=[(132k), (13l2)]=[(132k), (12l3)].\]
Since the left hand side is equal to $[(1234), (1235)]$, we see that 
\[[(1234), (1235)]=[(132k), (12l3)] \label{a12} \tag{a$_{12}$}\]
holds for all distinct $k, l \in \{4,5,\ldots, n\}$. 

Applying $(13) \in \mathfrak{S}_n$ to \eqref{a11} gives 
 \[[(1234), (1235)]=
 [(3214), (3215)]= [(312k), (312l)]=[(12k3), (12l3)] \label{a13} \tag{a$_{13}$}\]
holds for all distinct $k, l \in \{4,5,\ldots, n\}$. 

The equalities \eqref{a7}, \eqref{a9}, \eqref{a10}, \eqref{a11}, \eqref{a12}, \eqref{a13} cover all the cases to be considered. 
\end{proof}

\begin{lem}\label{lem:comm4}
$[(ijkl), (stuv)]=1$ holds if $|\{i,j,k,l\} \cap \{s,t,u,v\}| =4$. 
\end{lem}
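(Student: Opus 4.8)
The plan is to reduce by symmetry to a single commutator with common support $\{1,2,3,4\}$, expand one factor by a pentagon relation, and thereby express the commutator in terms of the triple-commutators $c_{xyz}$ produced by Lemma~\ref{lem:comm3}.

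First I would invoke the $\mathfrak{S}_n$-action to assume $\{i,j,k,l\}=\{s,t,u,v\}=\{1,2,3,4\}$. By the dihedral relation there are exactly three generators supported on $\{1,2,3,4\}$, namely $(1234)$, $(1324)$ and $(1243)$, and under $\Phi_2^{(2)}$ they correspond to the three perfect matchings $\{1,3\}+\{2,4\}$, $\{1,2\}+\{3,4\}$ and $\{1,4\}+\{2,3\}$. The subgroup $\mathfrak{S}_4\subset\mathfrak{S}_n$ permutes these matchings through the natural surjection $\mathfrak{S}_4\twoheadrightarrow\mathfrak{S}_3$, and this action is transitive on the three unordered pairs of matchings. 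Hence it suffices to treat one instance, say $[(1234),(1324)]=1$; the other two then follow by transport along suitable elements of $\mathfrak{S}_4$.

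Next I would expand $(1324)$ using the pentagon relation for $\{1,3,2,4,5\}$, exactly as in the proof of Lemma~\ref{lem:importantrel}, to obtain
\[(1324)=(1245)(1325)(3245)(1345),\]
a product of four generators whose supports meet $\{1,2,3,4\}$ in the four $3$-subsets $\{1,2,4\}$, $\{1,2,3\}$, $\{2,3,4\}$, $\{1,3,4\}$. Conjugating by the involution $(1234)$ and rewriting each term as $(1234)\,g\,(1234)=[(1234),g]\,g$, Lemma~\ref{lem:comm3} identifies each error factor with the corresponding triple-commutator, so that
\[[(1234),(1324)]=c_{124}\,c_{123}\,c_{234}\,c_{134}\]
(once the factors have been collected), each $c_{xyz}$ being a well-defined involution by Lemma~\ref{lem:comm3}.

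The crux, and the step I expect to be the main obstacle, is to show that this product of four face-commutators is trivial. Two issues arise. First, collecting the factors as above requires that each $c_{xyz}$ commute with the generators slid past it; I would secure this (partial) centrality by choosing, as Lemma~\ref{lem:comm3} permits, representatives of $c_{xyz}$ whose two defining supports meet the relevant generator in at most two points and then invoking the commutative relation, which is precisely where the room afforded by $n\ge6$ is used. Second, and more seriously, one must prove the relation $c_{123}c_{124}c_{134}c_{234}=1$. Note that the obvious symmetry input is not enough: applying the transposition $(23)$, which exchanges $(1234)$ and $(1324)$ while permuting the four faces among themselves, only shows that the product equals its own inverse, i.e.\ squares to $1$, which is already guaranteed by Lemma~\ref{lem:comm3}. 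To get the vanishing I would introduce a second auxiliary index (again using $n\ge6$) and run the same kind of explicit pentagon-and-$\mathfrak{S}_n$ bookkeeping as in Lemma~\ref{lem:comm3}, comparing two factorizations of the product $(1234)(1324)$ so that each face-commutator is forced to appear an even number of times and cancels. Establishing this single relation among the $c_{xyz}$ is where essentially all of the work lies.
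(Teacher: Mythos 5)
Your reduction to a single case, the pentagon expansion $(1324)=(1245)(1325)(3245)(1345)$, and the rewriting of the conjugate $(1234)(1324)(1234)$ as $c_{124}(1245)\,c_{123}(1325)\,c_{234}(3245)\,c_{134}(1345)$ are all correct. But the proof stops exactly where the content of the lemma lies: you never prove the relation $c_{123}c_{124}c_{134}c_{234}=1$, and you acknowledge as much. At this stage of the paper the only available facts about these elements are well-definedness and $c_{xyz}^2=1$ (Lemma~\ref{lem:comm3}); they are not yet known to be central, mutually equal, or even to commute with one another (your remark that the $(23)$-symmetry shows the product ``equals its own inverse'' already tacitly assumes that $c_{123}$ and $c_{234}$ commute). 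Nor is the four-fold relation an easier subgoal: it only becomes transparent (as $c^4=1$) after Proposition~\ref{prop:finitequotient}, whose proof in turn relies on Lemma~\ref{lem:comm4} through Lemma~\ref{lem:comm4center}, so nothing downstream can be invoked. The paper's proof is engineered precisely to avoid ever comparing distinct $c_{xyz}$: it proves $[(1234),(1243)]=1$ (your case then follows, since $[(1234),(1324)]=[(4123),(4132)]$ is an $\mathfrak{S}_n$-image of that commutator), by first showing that $(1243)(1235)$ commutes with $(1234)(1243)$; in that computation each transposition of adjacent generators creates a $c_{xyz}$ that is immediately paired with a second copy of the \emph{same} $c_{xyz}$ and cancelled by $c_{xyz}^2=1$, so distinct $c$'s never need to be collected or related.

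There is a second, independent failure: the collection step does not work for $n=6$. To slide $c_{123}$ past $(1245)$ by your remote-representative device you need $c_{123}=[(123p),(123q)]$ with $p\neq q$ and with both $(123p)$ and $(123q)$ commuting with $(1245)$ by relation (2); this forces $p,q\notin\{4,5\}$, hence $p,q\ge 6$ and distinct, which requires $n\ge 7$, not $n\ge 6$ as you claim (the same obstruction occurs when sliding $c_{123}$ past $(3245)$, $c_{234}$ past $(1325)$, and so on). Since at this point the natural map $\Gamma_6^4\to\Gamma_7^4$ is not known to be injective, the case $n=6$ cannot be recovered from larger $n$; so even if the relation $c_{123}c_{124}c_{134}c_{234}=1$ were granted, your argument would leave the initial case $n=6$ of Theorem~\ref{thm:main} unproven.
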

\begin{proof}
It suffices to show that $[(1234), (1243)]=1$ and $[(1234), (1324)]=1$ 
by considering the action of $\mathfrak{S}_n$. In fact, 
the latter follows from the former because 
$[(1234), (1324)]=[(4123), (4132)]$. 

When $\alpha^2=1$, we have
$\alpha [\alpha, \beta]=\beta \alpha \beta=[\beta, \alpha] \alpha$. 
Using this formula, we observe that $(1243)(1235)$ and $(1234)(1243)$ commutes. 
Indeed, from the pentagon relation for $\{1,2,4,3,5\}$, we have
\[(1243)(1235)=(1435)(1245)(2435).\]
Hence,
\begin{align*}
&(1243)(1235) \cdot (1234)(1243)\\
&=(1435)(1245)(2435) \cdot  (1234)(1243)\\
&=(1435)(1245)(1234)(2435)[(2435),(1234)](1243)\\
&=(1435)(1245)(1234)[(1234),(2435)](2435)(1243)\\
&=(1435)(1245)(1234) c_{234} (2435)(1243)\\
&=(1435)(1245)(1234) c_{234} [(2435),(1243)](1243)(2435)\\
&=(1435)(1245)(1234) c_{234}c_{234}(1243)(2435)\\
&=(1435)(1245)(1234)(1243)(2435)\\
&=(1435)(1234)(1245)c_{124}(1243)(2435)\\
&=(1435)(1234)c_{124}(1245)(1243)(2435)\\
&=(1435)(1234)c_{124}c_{124}(1243)(1245)(2435)\\
&=(1435)(1234)(1243)(1245)(2435)\\
&=(1234)(1435)c_{134}(1243)(1245)(2435)\\
&=(1234)c_{134}(1435)(1243)(1245)(2435)\\
&=(1234)c_{134}c_{134}(1243)(1435)(1245)(2435)\\
&=(1234)(1243)(1435)(1245)(2435)\\
&=(1234)(1243) \cdot (1243)(1235),
\end{align*}
\noindent
where we used the pentagon relation at the first and last equalities. 
Now we have 
\begin{align*}
(1234)(1243)&=(1243)(1243)(1235)(1235)(1234)(1243)\\
&=(1243)(1235)(1243)[(1243), (1235)](1235)(1234)(1243)\\
&=(1243)(1235)[(1235), (1243)](1243)(1235)(1234)(1243)\\
&=(1243)(1235) c_{123}(1243)(1235)(1234)(1243)\\
&=(1243)(1235)[(1235), (1234)](1243)(1235)(1234)(1243)\\
&=(1243)(1234)(1235)(1234) \underline{(1243)(1235)} \cdot \underline{(1234)(1243)}\\
&=(1243)(1234)(1235)(1234) \underline{(1234)(1243)} \cdot \underline{(1243)(1235)}\\
&=(1243)(1234). 
\end{align*}
\noindent
This completes the proof.
\end{proof}

\begin{lem}\label{lem:comm4center}
If $|\{i,j,k,l\} \cap \{s,t,u,v\}| =4$, the element $(ijkl)(stuv)$ is in the center of $\Gamma_n^4$. 
\end{lem}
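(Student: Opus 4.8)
The plan is to use the $\mathfrak{S}_n$-action to reduce to a single model element, and then to check centrality generator-by-generator, the only real work occurring when the overlap has size $3$.

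First I would reduce the claim. Since $|\{i,j,k,l\}\cap\{s,t,u,v\}|=4$ forces $\{i,j,k,l\}=\{s,t,u,v\}$, and since each $\sigma\in\mathfrak{S}_n$ acts by an automorphism (hence preserves the center), after applying a suitable $\sigma$ I may assume this common $4$-set is $\{1,2,3,4\}$. Up to the dihedral relation the generators supported on $\{1,2,3,4\}$ are exactly $(1234)$, $(1243)$, $(1324)$. If the two factors coincide, the product is $1$ by the involutive relation and there is nothing to prove. Otherwise the product is one of $(1234)(1243)$, $(1234)(1324)$, $(1243)(1324)$; these three form a single $\mathfrak{S}_4$-orbit, because $\mathfrak{S}_4$ permutes the three generators as the full symmetric group on the three ways of splitting $\{1,2,3,4\}$ into two pairs (visible through $\Phi_2^{(2)}$). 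Thus it suffices to prove that $w:=(1234)(1243)$ is central.

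The key elementary observation is that if $a\in\Gamma_n^4$ is an involution then $a[a,b]a^{-1}=[a,b]^{-1}$ for every $b$, so $a$ commutes with $[a,b]$ precisely when $[a,b]^2=1$. Writing $c_S$ for the element $c_{xyz}$ of Lemma \ref{lem:comm3} with $S=\{x,y,z\}$, I apply this as follows. Since $n\ge 6$, for any $3$-subset $S\subset\{1,2,3,4\}$ I may pick a generator $h$ supported on $S\cup\{m\}$ with $m\notin\{1,2,3,4\}$; then $[(1234),h]=c_S$ by Lemma \ref{lem:comm3}, and because $c_S^2=1$ (again Lemma \ref{lem:comm3}) the observation shows that the involution $(1234)$ commutes with $c_S$, for every such $S$.

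Finally I would verify that $w$ commutes with an arbitrary generator $g=(abcd)$, splitting into cases by $r:=|\{a,b,c,d\}\cap\{1,2,3,4\}|$. If $r\le 2$, the commutative relation (2) makes $g$ commute with both $(1234)$ and $(1243)$, hence with $w$; if $r=4$, Lemma \ref{lem:comm4} gives the same conclusion. The substantive case is $r=3$, with overlap $S$: here both $(1234)$ and $(1243)$ are supported on $\{1,2,3,4\}$, so Lemma \ref{lem:comm3} yields $[g,(1234)]=[g,(1243)]=c_S$, and expanding
\[
[g,w]=[g,(1234)]\cdot (1234)\,[g,(1243)]\,(1234)^{-1}=c_S\cdot(1234)\,c_S\,(1234)^{-1}=c_S\cdot c_S=1,
\]
where the middle step uses that $(1234)$ commutes with $c_S$ from the previous paragraph, together with $c_S^2=1$. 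The main obstacle is exactly this $r=3$ case, and within it the fact that $c_S$ centralizes the same-set generators; this is precisely where the involution identity and the relation $c_S^2=1$ of Lemma \ref{lem:comm3} are indispensable.
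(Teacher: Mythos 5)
Your proof is correct and follows essentially the same route as the paper's: reduce by the $\mathfrak{S}_n$-action to $w=(1234)(1243)$, split on the overlap size $r$, dispose of $r\le 2$ by the commutative relations and $r=4$ by Lemma \ref{lem:comm4}, and settle $r=3$ by identifying both commutators with $c_S$ via Lemma \ref{lem:comm3} and using $c_S^2=1$ together with the fact that $c_S$ commutes with $(1234)$ and $(1243)$. Your only additions are expository — the explicit orbit argument for the reduction and the involution identity justifying why $c_S$ commutes with the same-set generators, a point the paper asserts without proof.
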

\begin{proof}
It suffices to show that $(1234)(1243)$ is in the center. When 
$|\{1,2,3,4\} \cap \{i,j,k,l\}| \le 2$, the commutative relation says that 
$(1234)(1243)$ and $(ijkl)$ commutes. 
This holds also when 
$|\{1,2,3,4\} \cap \{i,j,k,l\}|=4$ because of Lemma \ref{lem:comm4}. 

The remaining case is when $|\{1,2,3,4\} \cap \{i,j,k,l\}|=3$. Say $\{1,2,3,4\} \cap \{i,j,k,l\}=\{i,j,k\}$. 
Then $c_{ijk}$ commutes with $(1243)$ and $(1234)$. We have  
\begin{align*}
(1234)(1243)(ijkl)&=(1234)(ijkl)(1243)c_{ijk}=(1234)(ijkl)c_{ijk}(1243)\\
&=(ijkl)(1234)c_{ijk}c_{ijk}(1243)=(ijkl)(1234)(1243).
\end{align*}
\noindent
This completes the proof.
\end{proof}

When $|\{i,j,k,l\} \cap \{s,t,u,v\}| =4$, we write 
\[\displaystyle\binom{ijkl}{stuv}:=(ijkl)(stuv)=(stuv)(ijkl),\]
which is in the center of $\Gamma_n^4$ as we have just seen. 
%We write $\displaystyle\binom{1234}{1243}:=(1234)(1243)=(1243)(1234)=:\displaystyle\binom{1243}{1234}$. 

\begin{proof}[Proof of Proposition $\ref{prop:finitequotient}$]
By the $\mathfrak{S}_n$-action, it suffices to show that $c_{123}=c_{124}$. 
The pentagon relation for $\{3,4,1,2,5\}$ says that 
\[(3412)(3425)(4125)(3415)(3125)=1.\]
From this, we have 
\[(3415)(3125)(3412)=(4125)(3415).\]
We use this equality to see that 
\begin{align*}
c_{123}&=[(3412),(3125)]=(3412)(3125)(3412)(3125)\\
&=(3412)(3415)(3415)(3125)(3412)(3125)\\
&=(3412)(3415)(4125)(3415)(3125)\\
&=\binom{3412}{3421} \binom{4125}{4215} \binom{3125}{3215} 
(3421)(3415)(4215)(3425)(3215)\\
&=\binom{3412}{3421} \binom{4125}{4215} \binom{3125}{3215},
\end{align*}
\noindent
where the last equality follows from the pentagon relation for $\{3,4,2,1,5\}$. 
Hence 
\[c_{123}=[(1235),(1236)]=\binom{3412}{3421} \binom{4125}{4215} \binom{3125}{3215}.\]
Applying the transposition $(34) \in \mathfrak{S}_n$, we have
\begin{align*}
c_{124}&=[(1245),(1246)]=\binom{4312}{4321} \binom{3125}{3215} \binom{4125}{4215}\\
&=\binom{3412}{3421} \binom{3125}{3215} \binom{4125}{4215}=c_{123}.
\end{align*}
\noindent
This completes the proof.
\end{proof}

\section{The group $\Gamma_n^4$ is not abelian for $n \ge 6$}\label{sec:nonabelian}

We will show that $\Gamma_n^4=\overline{\Gamma_n^4}$ is not an abelian group for $n \ge 6$ 
by constructing a matrix representation 
whose restriction to $[\Gamma_n^4, \Gamma_n^4]$ is non-trivial. Specifically, we will prove that $c=[(1234), (1235)] \in \Gamma_n^4$ is a non-trivial element. 
The construction of the representation 
uses the presentation of $\Gamma_n^4$ in Definition \ref{def:gamma}.  For simplicity, we 
{\it identify} each $(ijkl) \in \mathcal{G}_n$ with its minimal expression $(i'j'k'l') \in \mathcal{G}_n^d$. 
This identification enables us to ignore the dihedral relations in a natural way. 

Recall the minimal generating set 
\[\Lambda_n =\{(1 2 3 l) \mid 4 \le l \le n\} \sqcup \{(1 j 2 l) \mid 3 \le j < l \le n\} \sqcup \{(1 j l k) \mid 2 \le j < k < l \le n\} \] 
of $\Gamma_n^4$ in Theorem \ref{thm:generating_gamma}. This set gives a basis of 
$H_1 (\Gamma_n^4) \cong (\mathbb{Z}/2\mathbb{Z})^{N_n}$. 
We endow the set $\Lambda_n$ with the following total order: 
Let $(s t u v), (s' t' u' v') \in \Lambda_n$. 
%Suppose that each of them is the minimal expression. 
We define $(s t u v) \prec (s' t' u' v')$ if 
\begin{itemize}
\item[(I)] \ $\max \{s, t, u, v\} < \max \{s', t', u', v'\}$; or 
\item[(II)] \ $(s, t, u, v)$ is prior to $(s', t', u', v')$ in the usual lexicographic order when $\max \{s, t, u, v\} = \max \{s', t', u', v'\}$. 
\end{itemize}
\noindent
The sorted generators are denoted by 
\[X_1=(1234), \quad X_2=(1235), \quad \ldots \quad , \quad X_{N_n}=(1 (n-1) 2 n)\]
satisfying $X_1 \prec X_2 \prec \cdots \prec X_{N_n}$. 
Note that the rule (I) makes the order $\prec$ stabilizable, 
namely the order of $\Lambda_{n+1}$ is obtained 
from that of $\Lambda_n$ by putting the new generators $\Lambda_{n+1} - \Lambda_n$ 
behind $\Lambda_n$. 
For example, the total order of $\Lambda_6$ is given by 
\begin{align*}
& (1234) \prec (1243) \prec (1324) \\
\prec \ &(1235) \prec (1253) \prec (1254) \prec (1325) \prec (1354) \prec (1425)\\
\prec \ & (1236) \prec (1263) \prec (1264) \prec (1265) \prec (1326) \prec (1364)\\
\prec \ & (1365) \prec (1426) \prec (1465) \prec (1526).
\end{align*}
We identify $H_1 (\Gamma_n^4)$ with $(\mathbb{Z}/2\mathbb{Z})^{N_n}$ under the ordered basis $\Lambda_n$. 
We consider $(\mathbb{Z}/2\mathbb{Z})^{N_n}$ to be a vector space of column vectors.

From our computation in \cite[Section 2]{DTS} 
together with Proposition \ref{prop:finitequotient} and Lemma \ref{lem:comm4} in the previous section, 
we may write each of the elements of $\mathcal{G}_n^d$ 
by using the minimal generating set $\Lambda_n$ as follows.  

\begin{lem}\label{lem:bymingen}
The set $\mathcal{G}_n^d$ is divided into the following fifteen types, each of whose elements 
is written by using $\Lambda_n$ as in the right hand side 
(the underlined part will be explained later): 
\begin{itemize}
\item[(G1)] $\underline{(1 2 3 l)}$ with $4 \le l \le n$, 

\item[(G2)] $\underline{(1 j 2 l)}$ with $3 \le j < l \le n$, 

\item[(G3)] $\underline{(1 j l k)}$ with $2 \le j < k < l \le n$, 

\item[(A1)] 
$(1 2 k l)$ with $4 \le k < l \le n$,

$(1 2 k l)= c (123k)(12k3)(123l)(12l3)\underline{(12lk)}$, 

\item[(A2)] 
$(1 j k l)$ with $3 \le j < k < l \le n$,

$(1 j k l)= c (12kj)(1j2k) (12lj)(1j2l)\underline{(1jlk)}$,

\item[(A3)] 
$(2 j k l)$ with $3 \le j < k < l \le n$,

$(2 j k l)=\underline{(1j2k)(12lk)(1j2l)(1jlk)}$,

\item[(A4)] 
$(i j k l)$ with $3 \le i < j < k < l \le n$,

$(i j k l)=c (12kj)\underline{(1ikj)}(1j2k)(12lj)\underline{(1ilj)(1ilk)}(1j2l)\underline{(1jlk)}$, 

\item[(B1)] 
$(2 3 l k)$ with $4 \le k < l \le n$,

$(2 3 l k)=\underline{(123k)(123l)(12lk)(13lk)}$,

\item[(B2)] 
$(2 j l k)$ with $4 \le j < k < l \le n$,

$(2 j l k)=(123k)(12k3)\underline{(12kj)}(123l)(12l3)\underline{(12lj)(12lk)(1jlk)}$,

\item[(B3)] 
$(i j l k)$ with $3 \le i  < j < k < l \le n$, 

$(i j l k)=(12ki)(1i2k)\underline{(1ikj)}(12li)(1i2l)\underline{(1ilj)(1ilk)(1jlk)}$, 

\item[(C1)] 
$(1 k 3 l)$ with $4 \le  k < l \le n$,

$(1 k 3 l)=(123k)(132k)(12lk)\underline{(13lk)}(1k2l)$, 

\item[(C2)] 
$(1 k j l)$ with $4 \le j < k < l \le n$,

$(1 k j l)= c(123j)(12j3)(123k)(12k3)(12kj)(1j2k)(12lk)\underline{(1jlk)}(1k2l)$,

\item[(C3)] 
$(2 k j l)$ with $3 \le j < k < l \le n$, 

$(2 k j l)= c \underline{(1j2k)(12lj)(1jlk)(1k2l)}$, 

\item[(C4)] 
$(3 k j l)$ with $4 \le j < k < l \le n$, 

$(3 k j l)=(123j)(132j)(123k)(12kj)(132k)\underline{(13kj)}(1j2k)(12lk)\underline{(13lj)(13lk)}\\
\hskip 60pt \cdot \underline{(1jlk)}(1k2l)$,

\item[(C5)] 
$(i k j l)$ with $4 \le i < j < k < l\le n$,

$(i k j l)=(123j)(12j3)(12ji)(1i2j)(123k)(12k3)(12ki)(12kj)(1i2k)\underline{(1ikj)}\\
\hskip 60pt \cdot (1j2k)(12lk)\underline{(1ilj)(1ilk)(1jlk)}(1k2l)$.
\end{itemize}
\end{lem}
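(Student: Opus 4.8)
The plan is to dispose of (G1), (G2), (G3) immediately, since their elements already lie in $\Lambda_n$ and the asserted formula is the generator itself. For each of the remaining twelve types (A1)--(C5), I would establish the formula in two independent stages: first identify the \emph{support}, i.e.\ the collection of minimal generators occurring on the right-hand side, and then pin down the single central factor $c^{\epsilon}$ with $\epsilon\in\{0,1\}$. The support is forced by the abelianization: by Theorem~\ref{thm:abelianization} the minimal generating set $\Lambda_n$ is a basis of $H_1(\Gamma_n^4)\cong(\mathbb{Z}/2\mathbb{Z})^{N_n}$ and $\Phi_3^{(2)}\oplus\Phi_2^{(2)}$ is explicitly computable, so expanding the image of a given $(ijkl)\in\mathcal{G}_n^d$ in this basis determines exactly which generators appear (each with coefficient $1$). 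This is the content of the rewriting carried out in \cite[Section~2]{DTS}, and it produces the $\prec$-sorted product $w$ displayed on the right, the $\prec$-largest factor of which is underlined.

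It then remains to determine $\epsilon$. The engine is that, by Proposition~\ref{prop:finitequotient} and Lemma~\ref{lem:comm3}, any two generators sharing exactly three indices commute up to the single central involution $c$, while by Lemma~\ref{lem:comm4} generators sharing all four indices commute and by relation~(2) generators sharing at most two indices commute. For each type I would therefore take an expression for $(ijkl)$ coming from one pentagon relation, rewrite each factor into its normal form, and then sort the whole word into $\prec$-order by adjacent transpositions; each transposition of a pair that shares exactly three indices contributes one factor of $c$ and every other transposition contributes nothing, so $\epsilon$ is the number of three-index transpositions encountered, reduced modulo~$2$. I would keep this a genuinely explicit computation rather than appealing to uniqueness of a normal form, since at this stage of the paper the nontriviality of $c$ has not yet been proved.

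To keep the twelve cases manageable I would exploit the $\mathfrak{S}_n$-action, which fixes $c$: within each of the families (A), (B), (C)---distinguished by how many elements of $\{1,2,3\}$ the tuple contains and in which slots they sit---the same pentagon relation and the same sorting pattern recur, so the parity count of three-index transpositions need only be carried out once per qualitative pattern and then transported to the other members of the family by a permutation. An induction on $\max\{i,j,k,l\}$, compatible with the stabilising order~$\prec$, guarantees that whatever fifth index $m$ is chosen for the pentagon relation, the intervening factors again lie in $\mathcal{G}_n^d$ and have already-known normal forms.

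The step I expect to be the main obstacle is precisely this bookkeeping of the central corrections: a single miscounted three-index transposition flips $\epsilon$ and so erroneously inserts or deletes $c$, and one must also check that the chosen pentagon relation never introduces a factor outside $\mathcal{G}_n^d$. Verifying that the support $w$ obtained in the first stage matches the $H_1$-image of the left-hand side provides a useful consistency check on the sorted product, after which the only remaining delicacy is the parity determining $\epsilon$.
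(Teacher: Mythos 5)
Your proposal is correct and follows essentially the same route as the paper: the paper obtains these identities by taking the pentagon-relation rewriting of \cite[Section~2]{DTS} and sorting the factors into normal form, using Proposition~\ref{prop:finitequotient}, Lemma~\ref{lem:comm4} and relation~(2) to record a central factor $c$ exactly for each adjacent transposition of generators sharing three indices --- which is precisely your stage-two parity count, and your insistence on an explicit derivation (rather than normal-form uniqueness, since $c$ is not yet known to be nontrivial) matches the paper's logic. Your stage one (reading off the support from $\Phi_3^{(2)}\oplus\Phi_2^{(2)}$ and Theorem~\ref{thm:abelianization}) is a harmless streamlining/consistency check rather than a different method.
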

\noindent
As we saw in Theorem \ref{thm:generating_gamma}, 
the elements in (G1), (G2), (G3) form $\Lambda_n$. 
The total number of the above elements is $\displaystyle\frac{n(n-1)(n-2)(n-3)}{8}$, 
which coincides with the number of ordered quadruples of distinct integers in $[n]$ up to the dihedral action. 
 
Now we construct a matrix representation
\[r \colon \Gamma_n^4 \longrightarrow \operatorname{GL}_{N_n+2}(\mathbb{Z}/2\mathbb{Z})\]
of dimension $N_n+2 = \binom{n}{3}+1$. The construction is based on the well-known standard representation of the integral Heisenberg group. 
Our representation $r$ is decomposed as 
\[r (x) := \begin{pmatrix}
1 & \mathfrak{b}(x) & \mathfrak{e}(x) \\
0_{N_n} & I_{N_n} & \mathfrak{a} (x) \\
0 & {}^t 0_{N_n} & 1
\end{pmatrix}\]
for $x \in \Gamma_n^4$, 
where $\mathfrak{b}(x)$ is a row vector of size $N_n$, $0_{N_n}$ and $\mathfrak{a}(x)$ are column vectors of size $N_n$ 
and $\mathfrak{e} (x) \in \mathbb{Z}/2\mathbb{Z}$. Here, ${}^t $ denotes the transpose. 
For $r$ to be a representation, 
$\mathfrak{a}$ and $\mathfrak{b}$ should be homomorphisms and 
$\mathfrak{e}$ should satisfy $\mathfrak{e} (xy)=\mathfrak{e} (x)+\mathfrak{e} (y)+\mathfrak{b}(x) \mathfrak{a} (y)$ 
for any $x,y \in \Gamma_n^4$. 

The column vector $\mathfrak{a} (x)$ is given by the abelianization map 
\[\mathfrak{a} \colon \Gamma_n^4 \longrightarrow H_1 (\Gamma_n^4) \cong (\mathbb{Z}/2\mathbb{Z})^{N_n}, \]
so that the map $\mathfrak{a}$ is a homomorphism. 
The map $\mathfrak{b}$, which is also a homomorphism, is given by 
\[\mathfrak{b}(x)= {}^t(D \cdot \mathfrak{a}(x)),\]
where $D=(\bm{d}_1, \bm{d}_2, \ldots, \bm{d}_{N_n})$ 
is a square matrix of size $N_n$. The vector $\bm{d}_k$ is defined as 
\[\bm{d}_k := \sum_{j=1}^{k-1} \mu_k^j \bm{e}_j\]
where $\bm{e}_j$ is the $j$-th elementary column vector in $(\mathbb{Z}/2\mathbb{Z})^{N_n}$ and 
\[\mu_k^j=
\begin{cases}
1 & \text{if $X_j=(stuv)$ and $X_k=(s't'u'v')$ satisfy $|\{s,t,u,v\} \cap \{s', t', u',v'\}|=3$}\\
0 & \text{otherwise}
\end{cases}.\]
Consequently, 
\[x \circ y :=\mathfrak{b}(x) \mathfrak{a} (y) = {}^t(D \cdot \mathfrak{a}(x)) \mathfrak{a}(y)\qquad (x, y \in \Gamma_n^4)\]
induces a $(\mathbb{Z}/2\mathbb{Z})$-valued bilinear form over the $(\mathbb{Z}/2\mathbb{Z})$-vector space $H_1 (\Gamma_n^4)$. 

\begin{remark}
The above construction comes from the well-known normal form of $2$-step nilpotent $2$-groups.  
We refer to Sims' book \cite{Sims} for structures of nilpotent $p$-groups. 
From our discussion in Section \ref{sec:finite}, an element in $\Gamma_n^4$ is uniquely written of the form (called the normal form) 
\[c^{\varepsilon_0} X_1^{\varepsilon_1} X_2^{\varepsilon_2} X_3^{\varepsilon_3} \cdots X_{N_n}^{\varepsilon_{N_n}} 
\qquad (\varepsilon_i \in \{0,1\})\]
{\it up to} $c$, which has not yet been shown  
%is so far not known 
to be non-trivial, but is in the center and satisfies $c^2=1$. 
Note that the right hand sides of equalities in Lemma \ref{lem:bymingen} are all normal forms.  
Given two elements $x, y \in \Gamma_n^4$ having normal forms 
$x=c^{\varepsilon_0} X_1^{\varepsilon_1} X_2^{\varepsilon_2} \cdots X_{N_n}^{\varepsilon_{N_n}}$ and 
$y=c^{\eta_0} X_1^{\eta_1} X_2^{\eta_2}  \cdots X_{N_n}^{\eta_{N_n}}$, 
the normal form of their product is obtained by applying $X_k X_j =c^{\mu_{jk}}X_jX_k$ in an appropriate order 
for all possible $j < k$ with $\varepsilon_k \eta_j=1$. 
The result is 
\[c^{\varepsilon_0+\eta_0+x \circ y}
%\mathfrak{b}(x) \mathfrak{a} (y)}
X_1^{\varepsilon_1+\eta_1}X_2^{\varepsilon_2+\eta_2}  \cdots X_{N_n}^{\varepsilon_{N_n}+\eta_{N_n}}.\]
\end{remark}

We see from the above description that the matrix $r(x)$ is  recovered from 
\[(\mathfrak{e}(x), \mathfrak{a}(x)) \in (\mathbb{Z}/2\mathbb{Z}) \times H_1 (\Gamma_n^4).\] 
This gives a description of the central extension in Theorem \ref{thm:main}.  
We denote this central extension by $(\mathbb{Z}/2\mathbb{Z}) \widetilde{\times} H_1 (\Gamma_n^4)$. Indeed, 
the product rule of $(\mathbb{Z}/2\mathbb{Z}) \widetilde{\times} H_1 (\Gamma_n^4)$ obtained from 
products of matrices is
\[(\mathfrak{e}(x), \mathfrak{a}(x))  (\mathfrak{e}(y), \mathfrak{a}(y)) =
(\mathfrak{e}(x)+\mathfrak{e}(y)+ x \circ y, \mathfrak{a}(x) + \mathfrak{a}(y)).\]
We regard $(\mathbb{Z}/2\mathbb{Z}) \widetilde{\times} H_1 (\Gamma_n^4)$ as a subgroup of $\operatorname{GL}_{N_n+2}(\mathbb{Z}/2\mathbb{Z})$. 
The element $(0,0) \in (\mathbb{Z}/2\mathbb{Z}) \widetilde{\times} H_1 (\Gamma_n^4)$ 
corresponds to $I_{N_n+2} \in \operatorname{GL}_{N_n+2}(\mathbb{Z}/2\mathbb{Z})$. 

Finally, we put 
%\begin{equation}\label{eq:vecte}
\[
\mathfrak{e}(X)=\begin{cases}
1 & \text{if $X$ is of type (A1), (A2), (A4), (C2) or (C3),}\\
0 & \text{otherwise}
\end{cases}\]
%\end{equation}
%\noindent
for $X \in \mathcal{G}_n^d$. We now prove the following. 
\begin{prop}\label{prop:representation}
Let $n \ge 4$. The above assignment to the generating set $\mathcal{G}_n^d$ defines a matrix representation 
\[r \colon \Gamma_n^4 \longrightarrow (\mathbb{Z}/2\mathbb{Z}) \widetilde{\times} H_1 (\Gamma_n^4) 
\subset \operatorname{GL}_{N_n+2} (\mathbb{Z}/2\mathbb{Z})\]
satisfying $r( c )= (1, 0) \neq (0,0)$. Therefore the commutator subgroup 
$[\Gamma_n^4, \Gamma_n^4]$ is non-trivial. 
\end{prop}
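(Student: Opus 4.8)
The plan is to regard $r$ as a map into the group $G:=(\mathbb{Z}/2\mathbb{Z})\,\widetilde{\times}\,H_1(\Gamma_n^4)$ and to verify that the assignment on $\mathcal{G}_n^d$ respects the four families of defining relations in Definition \ref{def:gamma}. Since $\mathfrak{a}$ is the abelianization homomorphism and $\mathfrak{b}={}^t(D\cdot\mathfrak{a})$ is its composite with a linear map, both $\mathfrak{a}$ and $\mathfrak{b}$ are homomorphisms, so the only datum to control is the scalar $\mathfrak{e}$. Composing $r$ with the projection $G\to H_1(\Gamma_n^4)$ recovers $\mathfrak{a}$, whence every relator automatically maps into the central subgroup $\{(0,0),(1,0)\}$: its second component is $\mathfrak{a}$ of the trivial element. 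Consequently it suffices to show, for each relation written as a product $g_1\cdots g_m$ of elements of $\mathcal{G}_n^d$, that the scalar contribution $\sum_{a}\mathfrak{e}(g_a)+\sum_{a<b}(g_a\circ g_b)$ vanishes in $\mathbb{Z}/2\mathbb{Z}$, where $g_a\circ g_b$ is the bilinear form introduced before Proposition \ref{prop:representation}.

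I would first dispatch the three easy families. The dihedral relations hold by construction, as each generator is identified with its minimal expression in $\mathcal{G}_n^d$ before $\mathfrak{a}$ and $\mathfrak{e}$ are defined. For the commutative relations, the symmetrized form $(x,y)\mapsto x\circ y+y\circ x$ is a symmetric bilinear form on $H_1(\Gamma_n^4)$ which, by the definition of $\mu_k^j$, agrees on the ordered basis $\Lambda_n$ with the genuine commutator pairing $(\overline{X_i},\overline{X_j})\mapsto[X_i,X_j]$ of $\Gamma_n^4$; indeed $[X_i,X_j]=c$ exactly when the index sets meet in three points (Proposition \ref{prop:finitequotient}) and equals $1$ otherwise (Lemma \ref{lem:comm4} and the commutative relations). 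As two bilinear forms agreeing on a basis coincide, $x\circ y+y\circ x$ is the commutator pairing; since the latter vanishes when $|\{i,j,k,l\}\cap\{s,t,u,v\}|\le 2$, so does $x\circ y+y\circ x$, which is precisely the statement that $r$ respects the commutative relations. For the involutive relations, $r(X)^2=(X\circ X,0)$ with $X\circ X=\sum_{j<k}\mu_k^j\,(\mathfrak{a}(X))_j(\mathfrak{a}(X))_k$, which counts modulo $2$ the pairs of terms in the minimal-generator expansion of $X$ from Lemma \ref{lem:bymingen} whose index sets meet in exactly three points. I would check this count is even for each of the fifteen types: for the types in $\Lambda_n$ it is immediate, and for the remaining types it is a short count read directly off Lemma \ref{lem:bymingen}.

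The main work, and the principal obstacle, is the pentagon relations. For a fixed quintuple and an admissible ordering I would substitute the minimal-generator expansions of the five generators from Lemma \ref{lem:bymingen}, collect the constant contributions $\sum_a\mathfrak{e}(g_a)$ coming from the underlined leading $c$'s (the types A1, A2, A4, C2, C3) together with the pairwise contributions $\sum_{a<b}(g_a\circ g_b)$, and verify that the total is $0$. Here the bilinearity of $\circ$ and the bookkeeping encoded by the underlined terms in Lemma \ref{lem:bymingen} are essential for organizing the computation. The genuine difficulty is that $r$ is \emph{not} $\mathfrak{S}_n$-equivariant: the total order $\prec$ and the basis $\Lambda_n$ break the symmetry, so distinct orderings of a quintuple produce genuinely different combinations of the fifteen types, and these must be treated rather than collapsed to a single representative.

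Finally, granting that $r$ is a well-defined homomorphism, I would compute $r(c)$ directly. Since $c=[(1234),(1235)]=[X_1,X_2]$ and $r(X_1)=(0,\bm{e}_1)$, $r(X_2)=(0,\bm{e}_2)$, the commutator in $G$ has trivial $H_1$-component and scalar component $X_1\circ X_2+X_2\circ X_1=\mu_2^1$, so $r(c)=(\mu_2^1,0)$. As $(1234)$ and $(1235)$ share exactly the three indices $\{1,2,3\}$, we have $\mu_2^1=1$, hence $r(c)=(1,0)\neq(0,0)$. Therefore $c$ is a non-trivial element of $[\Gamma_n^4,\Gamma_n^4]$, which completes the proof.
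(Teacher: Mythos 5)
Your overall architecture matches the paper's: reduce well-definedness to a scalar condition $\sum_a\mathfrak{e}(g_a)+\sum_{a<b}(g_a\circ g_b)=0$ for each relator, handle the dihedral relations by the identification with minimal expressions, check involutive relations by a parity count on the expansions of Lemma \ref{lem:bymingen}, and finish by computing $r(c)=(1,0)$ (that computation is correct). However, your treatment of the \emph{commutative} relations contains a genuine circularity. You argue that the symmetrized form $x\circ y+y\circ x$ ``agrees on the basis $\Lambda_n$ with the genuine commutator pairing'' of $\Gamma_n^4$ and hence equals it, so that vanishing of $[X,Y]$ for $|\{i,j,k,l\}\cap\{s,t,u,v\}|\le 2$ forces vanishing of $X\circ Y+Y\circ X$. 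But the commutator pairing takes values in $[\Gamma_n^4,\Gamma_n^4]=\langle c\rangle$, and identifying $\langle c\rangle$ with $\mathbb{Z}/2\mathbb{Z}$ (so that ``$[X_i,X_j]=c$'' can be read as the value $1$) presupposes $c\neq 1$ --- which is exactly what Proposition \ref{prop:representation} is designed to prove; at this point of the argument, Section \ref{sec:finite} has only narrowed $\Gamma_n^4$ to two candidates, one of which is the elementary abelian group with $c=1$. If $c=1$, the commutator pairing is identically trivial and visibly does \emph{not} agree with the symmetrized form (which takes the value $\mu_2^1=1$ on $(\overline{X_1},\overline{X_2})$), so the inference from ``commutators vanish on generators meeting in $\le 2$ indices'' to ``the form vanishes there'' collapses. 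The paper instead verifies this case directly (Lemma \ref{lem:relcomm}): expand $X$ and $Y$ in $\Lambda_n$ via Lemma \ref{lem:bymingen} and count, purely combinatorially, the pairs of basis generators sharing exactly three indices; no property of $\Gamma_n^4$ beyond the expansions themselves is used.

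Separately, for the pentagon relations --- which you correctly identify as the principal obstacle, including the key point that $r$ is not $\mathfrak{S}_n$-equivariant so distinct orderings of a quintuple must be treated separately --- you only describe the verification and do not carry it out. This is the bulk of the paper's proof (Lemma \ref{lem:relpentagon}): after reducing by the dihedral symmetry of the pentagon relation to orderings with $s=\min\{s,t,u,v,w\}$ and $t<w$, there remain twelve families, each split into subcases according to how the quintuple meets $\{1,2,3\}$, and in every row one must check $\sum_{i=1}^5\mathfrak{e}(x_i)+x_1\circ x_2+x_1\circ x_3+x_2\circ x_3+x_5\circ x_4=0$. Until both this case analysis and a non-circular commutative-relation check are actually performed, the proposal does not establish the proposition.
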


Our proof of Proposition \ref{prop:representation} is straightforward. It is given by 
%The proof is by 
checking that the assignment $\mathfrak{e}(X)$ to $X \in \mathcal{G}_n^d$ preserves the relations 
in the original presentation of $\Gamma_n^4$. 
We may ignore the dihedral relations as mentioned in the beginning of this section. 
We will write all the data for our proof in the rest of this section. 
One can also give a computer aided proof, which is discussed in the next section. 

\begin{lem}\label{lem:relinvol}
For any generator $X \in \mathcal{G}_n^d$, the equalities $r(X)^2 =(0,0)$ and $X \circ X=0$ hold.
\end{lem}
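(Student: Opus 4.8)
The plan is to use characteristic $2$ to collapse the two assertions into one. Writing $r(X)=(\mathfrak{e}(X),\mathfrak{a}(X))$ and applying the product rule of $(\mathbb{Z}/2\mathbb{Z})\widetilde{\times}H_1(\Gamma_n^4)$ gives
\[r(X)^2=(\mathfrak{e}(X)+\mathfrak{e}(X)+X\circ X,\ \mathfrak{a}(X)+\mathfrak{a}(X))=(X\circ X,\ 0),\]
because every element of $\mathbb{Z}/2\mathbb{Z}$ equals its own negative. Hence $r(X)^2=(0,0)$ if and only if $X\circ X=0$, and it suffices to prove the latter; this removes $\mathfrak{e}$ from the discussion entirely.

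Next I would unwind $X\circ X$ combinatorially. By Lemma~\ref{lem:bymingen}, each $X\in\mathcal{G}_n^d$ has a normal form $c^{\mathfrak{e}(X)}\prod_{i\in S}X_i$, where $S\subset\{1,\dots,N_n\}$ indexes the minimal generators occurring; since $c$ lies in the commutator subgroup, $\mathfrak{a}(X)=\sum_{i\in S}\bm{e}_i$. Using $D\bm{e}_k=\bm{d}_k=\sum_{j<k}\mu_k^j\bm{e}_j$ one obtains
\[X\circ X={}^t(D\,\mathfrak{a}(X))\,\mathfrak{a}(X)=\sum_{\substack{i,j\in S\\ j<i}}\mu_i^j,\]
and since $\mu_i^j=\mu_j^i$ this is exactly the parity of the number of unordered pairs of distinct generators in $S$ whose underlying four-element sets $U_i,U_j$ satisfy $|U_i\cap U_j|=3$. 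The lemma thereby reduces to a statement about counting triple overlaps among the generators listed in Lemma~\ref{lem:bymingen}.

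With this reformulation the argument splits over the fifteen types. For (G1), (G2), (G3) the set $S$ is a singleton, so there is no pair and $X\circ X=0$ immediately. For the six types whose generators all have underlying sets inside a common five-element set, namely (A1), (A2), (A3), (B1), (C1), (C3), any two distinct four-subsets of a five-set meet in exactly three points, so the count equals $\binom{|S|}{2}$ minus the number of pairs with equal underlying set, and one checks that this is even in each case (for instance $6$ for (A3) and $8$ for (A1)). For all types it is convenient to use the identity $X\circ X\equiv\sum_{T\in[n]_3}\binom{n_T}{2}\pmod 2$, where $n_T$ counts the generators in $S$ whose four-set contains $T$; this follows from $\sum_{\{i,j\}}\binom{|U_i\cap U_j|}{3}=\sum_T\binom{n_T}{2}$, and lets me read off the parity directly from the multiplicities $n_T$.

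The main obstacle is the explicit verification for the types spanning six or seven elements, namely (A4), (B2), (B3), (C2), (C4) and (C5), where two distinct four-sets may meet in only two points and so the overlaps must be tabulated rather than counted abstractly. For each such type I would list the underlying four-sets with multiplicity, compute all pairwise intersection sizes (or the multiplicities $n_T$), and sum; the computations are routine but lengthy, and the only genuine risk is a bookkeeping slip. I expect every count to be even (for example $18$ for both (A4) and (B2)). Morally this is forced by the group relation $X^2=1$ together with the normal form, which give $c^{X\circ X}=1$ in $\Gamma_n^4$; but since at this stage we may not yet assume $c\neq1$, the evenness must genuinely be confirmed by the combinatorial count rather than deduced from the group.
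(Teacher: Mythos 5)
Your reduction to $X\circ X=0$ and your reformulation of $X\circ X$ as the parity of the number of unordered pairs of normal-form factors whose underlying four-element sets meet in exactly three points are both correct, and they coincide with the paper's own starting point; your treatment of (G1)--(G3) and of the six types whose factors lie in a common five-element set is also sound (the counts $8,8,6,6,8,6$ for (A1), (A2), (A3), (B1), (C1), (C3) are right). The genuine gap is that for the remaining six types --- (A4), (B2), (B3), (C2), (C4), (C5) --- you never actually perform the verification: you say you \emph{would} tabulate the intersections and that you \emph{expect} every count to be even, exhibiting only the unchecked values $18$ for (A4) and (B2). Since this lemma is precisely a finite verification, and these are exactly the cases where your abstract five-point argument does not apply, leaving them undone leaves the lemma unproved. (For the record, your route does succeed if carried out: the counts are $18$, $18$, $18$, $24$, $42$, $66$ for (A4), (B2), (B3), (C2), (C4), (C5) respectively, all even; so the failure is one of execution, not of strategy.)

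The idea you are missing --- and which the paper uses to avoid any such tabulation --- is a cancellation principle: if two factors of the normal form have the \emph{same} underlying four-element set (e.g.\ $(123k)$ and $(12k3)$), then they contribute equally when paired against any third factor, and contribute $0$ when paired with each other (four common letters); hence both may be deleted without changing $X\circ X$ modulo $2$. The normal forms in Lemma \ref{lem:bymingen} are arranged so that the non-underlined factors consist exactly of such same-set pairs. After deleting them, every one of the fifteen types reduces to its underlined part, which is either a single generator (no pairs at all) or four generators whose sets are the four $4$-subsets containing $1$ of some $5$-set; any two of the latter meet in three points, giving $\binom{4}{2}=6\equiv 0$ pairs. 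This single uniform argument disposes of all fifteen types at once, including the six you left open, which is why the paper's proof needs no case-by-case counting.
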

\begin{proof}
Since $r(X)^2=(\mathfrak{e}(X), \mathfrak{a} (X))^2=(X \circ X, 0)$, we only need to consider $X \circ X$. 
To compute $X \circ X$, we write $X= x_1 x_2 \cdots x_k$ where $x_i \in \Lambda_n$ modulo $c$. 
It follows from the definition of the vector $\mathfrak{b}$ 
that the equality $x_j \circ x_j =0$ holds for $x_j \in \Lambda_n$. Hence we have 
\[X \circ X = \sum_{1 \le i < j \le k} (x_i \circ x_j + x_j \circ x_i).\]
Again, the definition of the vector $\mathfrak{b}$ says that 
\begin{align*}
& x_i \circ x_j + x_j \circ x_i \\
&= \begin{cases}
1 & \text{if $x_i=(stuv)$ and $x_j=(s't'u'v')$ satisfy $|\{s,t,u,v\} \cap \{s', t', u',v'\}|=3$}\\
0 & \text{otherwise}
\end{cases}.
\end{align*}
Therefore $X \circ X$ counts the number of the pairs $\{x_i, x_j\}$ where $x_i$ and $x_j$ share just $3$ letters. 
Note that for $x_i=(stuv)$ and $x_j=(s't'u'v')$, the number $x_i \circ x_j + x_j \circ x_i \in \mathbb{Z}/2\mathbb{Z}$ 
depends only on the {\it sets} $\{s,t,u,v\}$ and $\{s',t',u',v'\}$. So, for example, 
in the case of 
\[X=(1245)=c (1234) (1243) (1235)(1253) (1254)\]
of type (A1), the equality
$(1234) \circ x_l+x_l \circ (1234)=(1243) \circ x_l+x_l \circ (1243)$ holds for all $x_l \in \Lambda_n$. So 
$(1234) \circ x_l+x_l \circ (1234) + (1243) \circ x_l+x_l \circ (1243)$
contributes $0$ or $2 \equiv 0$ to $X \circ X$. Hence in our counting process, we may ignore such pairs. 
After removing them, the underlined parts remain in the right hand sides of the equalities of Lemma \ref{lem:bymingen}. 
They are divided into two patterns. Some have the word length $1$ and the others have $4$. 
In the latter case, the underlined part consists of four elements of $\Lambda_n$ obtained from the quadruples 
\[\{1,i,j,k\}, \quad \{1,i,j,l\}, \quad \{1, i, k, l\}, \quad \{1,j, k, l\}\]
for some $\{i,j,k,l\} \subset \{2,3,\ldots,n\}$. 
It is easy to see that $X \circ X=0$ holds by just looking these underlined parts. 
\end{proof}

\begin{lem}\label{lem:relcomm}
For any pair of generators $X=(stuv), Y=(s't'u'v') \in \mathcal{G}_n^d$ satisfying 
$|\{s,t,u,v\} \cap \{s', t', u',v'\}| \le 2$, the equality $[r(X),r(Y)] =(0,0)$ holds.
\end{lem}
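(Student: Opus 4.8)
The plan is to carry out the whole argument inside the target group $(\mathbb{Z}/2\mathbb{Z})\widetilde{\times}H_1(\Gamma_n^4)$, where the elements $r(X)=(\mathfrak{e}(X),\mathfrak{a}(X))$ and $r(Y)=(\mathfrak{e}(Y),\mathfrak{a}(Y))$ are defined outright, so that no prior knowledge that $r$ is a homomorphism is required. First I would compute the commutator directly from the product rule $(\mathfrak{e},a)(\mathfrak{e}',b)=(\mathfrak{e}+\mathfrak{e}'+a\circ b,\,a+b)$. By Lemma \ref{lem:relinvol} we have $X\circ X=Y\circ Y=0$, whence $r(X)^{-1}=r(X)$ and $r(Y)^{-1}=r(Y)$; feeding this into the product rule and using that every coordinate lives in characteristic $2$, the second coordinate of $[r(X),r(Y)]$ collapses to $0$ and a short bookkeeping yields
\[[r(X),r(Y)]=\bigl(\mathfrak{a}(X)\circ\mathfrak{a}(Y)+\mathfrak{a}(Y)\circ\mathfrak{a}(X),\ 0\bigr).\]
Thus the lemma reduces to showing that the \emph{symmetrized} pairing $B(a,b):=a\circ b+b\circ a$ vanishes at $a=\mathfrak{a}(X)$, $b=\mathfrak{a}(Y)$.

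Next I would analyze $B$ as a symmetric $(\mathbb{Z}/2\mathbb{Z})$-bilinear form on $H_1(\Gamma_n^4)$. Since $x\circ y={}^t(D\cdot\mathfrak{a}(x))\,\mathfrak{a}(y)$, we have $B(a,b)={}^t a\,(D+{}^tD)\,b$, and evaluating on the ordered basis $\Lambda_n=\{X_1,\dots,X_{N_n}\}$ gives, for $j\neq k$, the matrix entry $\mu_k^j$, which by definition equals $1$ exactly when the underlying $4$-element sets of $X_j$ and $X_k$ meet in three points. The conceptual step is to recognize this as coming from $\Phi_3^{(2)}$: for generators $X=(ijkl)$ and $Y=(stuv)$ with $4$-sets $A$ and $B$, the vector $\Phi_3^{(2)}(X)$ is precisely the indicator of the four $3$-subsets of $A$, so with respect to the standard inner product on $(\mathbb{Z}/2\mathbb{Z})[n]_3$ (for which $[n]_3$ is an orthonormal basis) one has $\langle\Phi_3^{(2)}(X),\Phi_3^{(2)}(Y)\rangle=\binom{|A\cap B|}{3}\bmod 2$, because the common $3$-subsets of $A$ and $B$ are exactly the $3$-subsets of $A\cap B$. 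This equals $1$ iff $|A\cap B|=3$, hence agrees with $\mu_k^j$ on every basis pair; as $\Phi_3^{(2)}$ factors through $H_1(\Gamma_n^4)$ (Theorem \ref{thm:abelianization}) and both $B$ and $(a,b)\mapsto\langle\Phi_3^{(2)}(a),\Phi_3^{(2)}(b)\rangle$ are bilinear, the two forms coincide identically.

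Finally I would combine these: under the hypothesis $|\{s,t,u,v\}\cap\{s',t',u',v'\}|\le 2$ we have $|A\cap B|\le 2$, so $\binom{|A\cap B|}{3}=0$ and therefore $B(\mathfrak{a}(X),\mathfrak{a}(Y))=\langle\Phi_3^{(2)}(X),\Phi_3^{(2)}(Y)\rangle=0$, giving $[r(X),r(Y)]=(0,0)$. The genuine payoff of this route is that passing through $\Phi_3^{(2)}$ replaces what would otherwise be a case analysis over the fifteen types of Lemma \ref{lem:bymingen} with a single uniform computation. The main obstacle, such as it is, lies in justifying the reduction to the symmetric form cleanly — that is, verifying that the $\mathfrak{e}$-data cancels correctly so that only the symmetrization survives — after which the combinatorial identity for $\langle\Phi_3^{(2)}(X),\Phi_3^{(2)}(Y)\rangle$ is immediate.
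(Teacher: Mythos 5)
Your proposal is correct, and after the initial reduction it takes a genuinely different route from the paper. Both proofs begin the same way: using Lemma \ref{lem:relinvol} and the product rule to collapse the commutator to $[r(X),r(Y)]=(X\circ Y+Y\circ X,\,0)$, so that everything hinges on the vanishing of the symmetrized pairing. At that point the paper expands $X$ and $Y$ as words in the minimal generating set $\Lambda_n$ via Lemma \ref{lem:bymingen} and reuses the cancellation trick of Lemma \ref{lem:relinvol} (pairs of generators with the same underlying $4$-set contribute $0$, so only the underlined parts survive), leaving a finite case check over the fifteen types which the paper declares ``easy to check.'' You instead observe that the symmetrized form $B(a,b)=a\circ b+b\circ a$ and the pullback $(a,b)\mapsto\langle\Phi_3^{(2)}(a),\Phi_3^{(2)}(b)\rangle$ of the standard inner product on $(\mathbb{Z}/2\mathbb{Z})[n]_3$ agree on all pairs of basis vectors from $\Lambda_n$ (both equal $1$ exactly when the two $4$-sets meet in three points, and both vanish on the diagonal since $\binom{4}{3}\equiv 0$), hence coincide as bilinear forms; since $\Phi_3^{(2)}$ is a homomorphism on $\Gamma_n^4$ whose value on any generator is the indicator of the four $3$-subsets of its $4$-set, you get the closed formula $B(\mathfrak{a}(X),\mathfrak{a}(Y))=\binom{|A\cap B|}{3}\bmod 2$, which vanishes when $|A\cap B|\le 2$. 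This buys a uniform, case-free verification that bypasses Lemma \ref{lem:bymingen} entirely for this lemma (though you still need Lemma \ref{lem:relinvol}, whose proof in the paper does use those expansions, since the quadratic data $X\circ X$ is not recoverable from the symmetrized form in characteristic $2$); as a bonus, your formula also yields the commutation $[r(X),r(Y)]=(0,0)$ when $|A\cap B|=4$ and the value $1$ when $|A\cap B|=3$, tying in neatly with Lemma \ref{lem:comm4} and the non-triviality of $r(c)$ in Proposition \ref{prop:representation}.
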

\begin{proof}
Using Lemma \ref{lem:relinvol}, we have 
\[[r(X),r(Y)]=((\mathfrak{e}(X), \mathfrak{a} (X))(\mathfrak{e}(Y), \mathfrak{a} (Y)))^2
=(X \circ Y + Y \circ X, 0).\]
Write $X= x_1 x_2 \cdots x_k$ and $Y= x_{k+1} x_{k+2} \cdots x_{k+l}$ 
where $x_i \in \Lambda_n$. Then 
\[X \circ Y + Y \circ X=\sum_{\begin{subarray}{c}
1\le i \le k \\ 1 \le j \le l\end{subarray}} (x_i \circ x_{k+j}+x_{k+j} \circ x_i).\]
Then by an argument similar to the one in the previous lemma, 
we may just look the underlined parts. It is easy to check that $X \circ Y + Y \circ X=0$ holds. 
\end{proof}

\begin{lem}\label{lem:relpentagon}
For any distinct $s,t,u,v,w \in [n]$, the equality 
\[r((stuv))r((stvw))r((tuvw))r((stuw))r((suvw)) =(0,0)\]
holds.
\end{lem}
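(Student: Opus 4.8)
The plan is to recognize the displayed identity as the image under $r$ of the pentagon relation and then verify it by a direct computation inside $(\mathbb{Z}/2\mathbb{Z})\widetilde{\times}H_1(\Gamma_n^4)$. Setting $(i,j,k,l,m)=(s,t,u,v,w)$ matches the factors $(stuv),(stvw),(tuvw),(stuw),(suvw)$ with $(ijkl),(ijlm),(jklm),(ijkm),(iklm)$, so the statement is exactly that $r$ respects the pentagon relation for $\{s,t,u,v,w\}$; note that the five quadruples are precisely the five $4$-element subsets of $\{s,t,u,v,w\}$. Writing $Y_1,\dots,Y_5$ for the minimal expressions of the five factors and iterating the product rule $(\mathfrak{e}(x),\mathfrak{a}(x))(\mathfrak{e}(y),\mathfrak{a}(y))=(\mathfrak{e}(x)+\mathfrak{e}(y)+x\circ y,\,\mathfrak{a}(x)+\mathfrak{a}(y))$, the five-fold product equals
\[\left(\ \sum_{i=1}^5\mathfrak{e}(Y_i)+\sum_{1\le i<j\le 5}Y_i\circ Y_j,\ \ \sum_{i=1}^5\mathfrak{a}(Y_i)\ \right).\]

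The $\mathfrak{a}$-component vanishes for free. Since $\mathfrak{a}$ is the abelianization homomorphism of $\Gamma_n^4$ and each $Y_i$ represents the same element as the corresponding factor, $\sum_i\mathfrak{a}(Y_i)$ is the image of the pentagon word $(stuv)(stvw)(tuvw)(stuw)(suvw)$, which is $1$ in $\Gamma_n^4$; hence $\sum_i\mathfrak{a}(Y_i)=\mathfrak{a}(1)=0$. It therefore remains to establish the scalar identity
\[\sum_{i=1}^5\mathfrak{e}(Y_i)+\sum_{1\le i<j\le 5}Y_i\circ Y_j=0\in\mathbb{Z}/2\mathbb{Z}.\]
This is where the content lies, and it cannot be deduced formally, since $\mathfrak{e}$ is not a homomorphism.

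To compute the scalar, I would expand each minimal expression $Y_i$ as a product of elements of $\Lambda_n$ via Lemma \ref{lem:bymingen}, reading $\mathfrak{e}(Y_i)$ off from its type. For two basis generators $x,y$ one has $x\circ y=1$ precisely when $x\succ y$ and $x,y$ share exactly three letters, so each pairing $Y_i\circ Y_j$ is a sum of such indicator contributions over the $\Lambda_n$-factors of $Y_i$ and $Y_j$. Exactly as in the proofs of Lemmas \ref{lem:relinvol} and \ref{lem:relcomm}, the symmetry relation ``$x\circ y+y\circ x$ equals $1$ if and only if $x$ and $y$ share three letters'' together with $x\circ x=0$ lets me discard the many factor-pairs that contribute in canceling ways, so that only the underlined parts of Lemma \ref{lem:bymingen} survive. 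Conceptually, the scalar being computed is just the accumulated exponent of $c$ when the fully expanded pentagon word is collected into normal form: the vanishing of the $\mathfrak{a}$-component guarantees that all $\Lambda_n$-exponents cancel, leaving only $c^{\mathfrak{e}}$, and the task is to check $\mathfrak{e}=0$.

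The main obstacle is bookkeeping rather than any single hard idea. Because the ordered basis $\Lambda_n$ and the order $\prec$ are not preserved by the $\mathfrak{S}_n$-action, I cannot reduce to one representative ordering of $\{s,t,u,v,w\}$; instead the types of the five minimal expressions, and hence all five values $\mathfrak{e}(Y_i)$ and all ten pairings $Y_i\circ Y_j$, depend on the relative order of $s,t,u,v,w$ as integers. The verification thus splits into finitely many cases indexed by this order type, and in each case one tabulates the retained (underlined) $\Lambda_n$-factors and counts the three-letter-sharing inversions modulo $2$. I expect each case to collapse quickly once the canceling pairs are removed, but writing out the complete table is lengthy; this is precisely the data to be recorded here, and it is also what the computer-assisted argument of Section \ref{sec:computer} checks mechanically.
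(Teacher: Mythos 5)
Your reduction of the lemma to the scalar identity is set up correctly and matches the paper: the $\mathfrak{a}$-component vanishes because $\mathfrak{a}$ is a genuine homomorphism on $\Gamma_n^4$ and the pentagon word is a relator, and iterating the product rule leaves exactly the quantity $\sum_{i}\mathfrak{e}(Y_i)+\sum_{i<j}Y_i\circ Y_j$ to be checked. (The paper additionally simplifies the ten pairings to four, $x_1\circ x_2+x_1\circ x_3+x_2\circ x_3+x_5\circ x_4$, using $\mathfrak{a}(x_1x_2x_3)=\mathfrak{a}(x_5x_4)$ and $x\circ x=0$, and cuts the orderings of $\{s,t,u,v,w\}$ from $120$ to $12$ via the dihedral symmetry of the pentagon relation; you do neither, which is not wrong but inflates the work.) However, at exactly the point where you say the content lies, the proposal stops: the case-by-case verification indexed by the order type of $s,t,u,v,w$ --- which in the paper occupies twelve tables with five to eight rows each and \emph{is} the proof --- is deferred as ``lengthy'' data ``to be recorded here.'' A correct plan for a computation is not the computation; as it stands the lemma is not proved.

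There is also a genuine error in the one labor-saving device you do propose. You claim that, ``exactly as in the proofs of Lemmas \ref{lem:relinvol} and \ref{lem:relcomm},'' the factor-pairs with a common underlying set may be discarded so that only the underlined parts of Lemma \ref{lem:bymingen} survive. That cancellation argument is valid only for \emph{symmetrized} sums $x\circ y+y\circ x$, which is all that appears in those two lemmas; the pentagon scalar instead involves the one-directional pairings $Y_i\circ Y_j$, and there a same-set pair is not inert. Concretely, in the order on $\Lambda_6$ one has $(1253)\prec(1254)\prec(1325)$, and $|\{1,2,3,5\}\cap\{1,2,4,5\}|=3$, so
\[
\bigl((1253)+(1325)\bigr)\circ(1254)=0+1=1\neq 0 .
\]
This situation actually occurs inside the pentagon computation: the non-underlined pair $(1253)(1325)$ appears in the type (A2) expansion of $(1345)$, the non-underlined factor $(1254)$ appears in the type (A4) expansion of $(3456)$, and both elements enter the pentagon for $\{1,3,4,5,6\}$ through the pairing $x_1\circ x_3$. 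In the instances I checked, such contributions cancel against \emph{other} non-underlined cross terms, so the final tabulated values come out the same; but that cancellation is precisely what would have to be proved, and it is not a consequence of the argument you cite. So your method, if executed as described, computes the individual pairings by an unjustified rule, and you would either have to track all factors honestly (as the paper's tables implicitly do) or supply a separate argument for why the discarded terms cancel in the total.
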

\begin{proof}
Put 
\[x_1=(stuv), \quad x_2=(stvw), \quad x_3=(tuvw), \quad x_4=(stuw), \quad x_5=(suvw)\]
for the pentagon relation for $\{s,t,u,v,w\}$. We have
\begin{align*}
r(x_1)r(x_2)r(x_3)r(x_4)r(x_5)&=\left( \sum_{i=1}^5 \mathfrak{e}(x_i) + 
\sum_{j=1}^4 (x_1 x_2 \cdots x_j) \circ x_{j+1}, 0 \right)\\
&=\left( \sum_{i=1}^5 \mathfrak{e}(x_i) +x_1 \circ x_2 + x_1 \circ x_3 +x_2 \circ x_3 + x_5 \circ x_4, 0 \right),
\end{align*}
\noindent
where at the second equality we used the equalities 
\[(x_1x_2 x_3) \circ x_4 = (x_5 x_4) \circ x_4 = x_5 \circ x_4, \qquad 
(x_1x_2 x_3 x_4) \circ x_5 =x_5 \circ x_5=0.\]
\noindent
We will check that 
\[\sum_{i=1}^5 \mathfrak{e}(x_i) +x_1 \circ x_2 + x_1 \circ x_3 +x_2 \circ x_3 + x_5 \circ x_4=0\]
holds. 

As remarked in \cite[Section 5.2]{DTS}, the pentagon relations have a dihedral symmetry. 
Indeed, the pentagon relation for $\{s,t,u,v,w\}$ together with the involutive and dihedral relations 
derives that for $\{t,u,v,w,s\}$ and for $\{w,v,u,t,s\}$. Therefore, for the proof of the lemma, 
it suffices to check the cases corresponding to the pentagon relations 
for $\{s,t,u,v,w\}$ satisfying $s=\min \{s,t,u,v,w\}$ and $t < w$. 

Assume that the integers $i, j, k, l, m$ satisfy $i < j < k < l < m$. 
The following twelve cases cover all we should consider. In the tables below, the middle block writes the type and 
the value of $\mathfrak{e}$ for each of $x_1, x_2, \ldots, x_5$. 
We see that for all cases, the sum of all numbers in a row is equal to $0$ in $\mathbb{Z}/2\mathbb{Z}$. 

(1) Pentagon relation for $\{i, j, k, l, m\}$: 
\[\begin{array}{|l|ccccc|cccc|}
\hline
& \begin{matrix} x_1 \\ (ijkl) \end{matrix} & 
\begin{matrix} x_2 \\ (ijlm) \end{matrix} & 
\begin{matrix} x_3 \\ (jklm) \end{matrix} & 
\begin{matrix} x_4 \\ (ijkm) \end{matrix} & 
\begin{matrix} x_5 \\ (iklm) \end{matrix} & x_1 \circ x_2 & x_1 \circ x_3 & x_2 \circ x_3 & x_5 \circ x_4\\\hline
%\begin{smallmatrix} i=1 \\ j=2 \\ k=3 \end{smallmatrix} 
%\begin{matrix} \text{()} \\ \end{matrix} &  \begin{matrix} \text{()} \\ \end{matrix} & 
%\begin{matrix} \text{()} \\ \end{matrix} &  \begin{matrix} \text{()} \\ \end{matrix} &
%\begin{matrix} \text{()} \\ \end{matrix} & &&&\\
\begin{smallmatrix} i=1 \\ j=2 \\ k=3 \end{smallmatrix} & 
\begin{matrix} \text{(G1)} \\ 0 \end{matrix} &  \begin{matrix} \text{(A1)} \\ 1 \end{matrix} & 
\begin{matrix} \text{(A3)} \\ 0 \end{matrix} &  \begin{matrix} \text{(G1)} \\ 0 \end{matrix} &
\begin{matrix} \text{(A2)} \\ 1 \end{matrix} & 0 & 0 & 1 & 1 \\ \hline
\begin{smallmatrix} i=1 \\ j=2 \\ k\ge 4 \end{smallmatrix} & 
\begin{matrix} \text{(A1)} \\ 1 \end{matrix} &  \begin{matrix} \text{(A1)} \\ 1 \end{matrix} & 
\begin{matrix} \text{(A3)} \\ 0 \end{matrix} &  \begin{matrix} \text{(A1)} \\ 1 \end{matrix} &
\begin{matrix} \text{(A2)} \\ 1 \end{matrix} & 0 & 0 & 1 & 1\\ \hline
\begin{smallmatrix} i=1 \\ j \ge 3 \end{smallmatrix} & 
\begin{matrix} \text{(A2)} \\ 1 \end{matrix} &  \begin{matrix} \text{(A2)} \\ 1 \end{matrix} & 
\begin{matrix} \text{(A4)} \\ 1 \end{matrix} &  \begin{matrix} \text{(A2)} \\ 1 \end{matrix} &
\begin{matrix} \text{(A2)} \\ 1 \end{matrix} & 0 & 0 & 0 & 1\\ \hline
\begin{smallmatrix}i=2 \end{smallmatrix}& 
\begin{matrix} \text{(A3)} \\ 0 \end{matrix} &  \begin{matrix} \text{(A3)} \\ 0 \end{matrix} & 
\begin{matrix} \text{(A4)} \\ 1 \end{matrix} &  \begin{matrix} \text{(A3)} \\ 0 \end{matrix} &
\begin{matrix} \text{(A3)} \\ 0 \end{matrix} & 1& 0 & 1 & 1\\ \hline
\begin{smallmatrix}i \ge 3 \end{smallmatrix}& 
\begin{matrix} \text{(A4)} \\ 1 \end{matrix} &  \begin{matrix} \text{(A4)} \\ 1 \end{matrix} & 
\begin{matrix} \text{(A4)} \\ 1 \end{matrix} &  \begin{matrix} \text{(A4)} \\ 1 \end{matrix} &
\begin{matrix} \text{(A4)} \\ 1 \end{matrix} & 0 & 1 & 0 & 0\\ \hline
\end{array}
\] 

(2) Pentagon relation for $\{i, j, k, m, l\}$: 
\[\begin{array}{|l|ccccc|cccc|}
\hline
& \begin{matrix} x_1 \\ (ijkm) \end{matrix} & 
\begin{matrix} x_2 \\ (ijml) \end{matrix} & 
\begin{matrix} x_3 \\ (jkml) \end{matrix} & 
\begin{matrix} x_4 \\ (ijkl) \end{matrix} & 
\begin{matrix} x_5 \\ (ikml) \end{matrix} & x_1 \circ x_2 & x_1 \circ x_3 & x_2 \circ x_3 & x_5 \circ x_4\\\hline
%\begin{smallmatrix} i=1 \\ j=2 \\ k=3 \end{smallmatrix} 
%\begin{matrix} \text{()} \\ \end{matrix} &  \begin{matrix} \text{()} \\ \end{matrix} & 
%\begin{matrix} \text{()} \\ \end{matrix} &  \begin{matrix} \text{()} \\ \end{matrix} &
%\begin{matrix} \text{()} \\ \end{matrix} & &&&\\
\begin{smallmatrix} i=1 \\ j=2 \\ k=3 \end{smallmatrix} & 
\begin{matrix} \text{(G1)} \\ 0 \end{matrix} &  \begin{matrix} \text{(G3)} \\ 0 \end{matrix} & 
\begin{matrix} \text{(B1)} \\ 0 \end{matrix} &  \begin{matrix} \text{(G1)} \\ 0 \end{matrix} &
\begin{matrix} \text{(G3)} \\ 0 \end{matrix} & 0 & 1 & 0 & 1\\ \hline
\begin{smallmatrix} i=1 \\ j=2 \\ k\ge 4 \end{smallmatrix} & 
\begin{matrix} \text{(A1)} \\ 1 \end{matrix} &  \begin{matrix} \text{(G3)} \\ 0 \end{matrix} & 
\begin{matrix} \text{(B2)} \\ 0 \end{matrix} &  \begin{matrix} \text{(A1)} \\ 1 \end{matrix} &
\begin{matrix} \text{(G3)} \\ 0 \end{matrix} & 0 & 1 & 0 & 1\\ \hline
\begin{smallmatrix} i=1 \\ j \ge 3 \end{smallmatrix} & 
\begin{matrix} \text{(A2)} \\ 1 \end{matrix} &  \begin{matrix} \text{(G3)} \\ 0 \end{matrix} & 
\begin{matrix} \text{(B3)} \\ 0 \end{matrix} &  \begin{matrix} \text{(A2)} \\ 1 \end{matrix} &
\begin{matrix} \text{(G3)} \\ 0 \end{matrix} & 0 & 1 & 0 & 1\\ \hline
\begin{smallmatrix}i=2 \\ j=3 \end{smallmatrix}& 
\begin{matrix} \text{(A3)} \\ 0 \end{matrix} &  \begin{matrix} \text{(B1)} \\ 0 \end{matrix} & 
\begin{matrix} \text{(B3)} \\ 0 \end{matrix} &  \begin{matrix} \text{(A3)} \\ 0 \end{matrix} &
\begin{matrix} \text{(B2)} \\ 0 \end{matrix} & 0 & 0 & 1 & 1\\ \hline
\begin{smallmatrix}i=2 \\ j \ge 4 \end{smallmatrix}& 
\begin{matrix} \text{(A3)} \\ 0 \end{matrix} &  \begin{matrix} \text{(B2)} \\ 0 \end{matrix} & 
\begin{matrix} \text{(B3)} \\ 0 \end{matrix} &  \begin{matrix} \text{(A3)} \\ 0 \end{matrix} &
\begin{matrix} \text{(B2)} \\ 0 \end{matrix} & 0 & 0 & 1 & 1\\ \hline
\begin{smallmatrix}i \ge 3 \end{smallmatrix}& 
\begin{matrix} \text{(A4)} \\ 1 \end{matrix} &  \begin{matrix} \text{(B3)} \\ 0 \end{matrix} & 
\begin{matrix} \text{(B3)} \\ 0 \end{matrix} &  \begin{matrix} \text{(A4)} \\ 1 \end{matrix} &
\begin{matrix} \text{(B3)} \\ 0 \end{matrix} & 1 & 0 & 0 & 1\\ \hline
\end{array}
\]

(3) Pentagon relation for $\{i, j, l, k, m\}$: 
\[\begin{array}{|l|ccccc|cccc|}
\hline
& \begin{matrix} x_1 \\ (ijlk) \end{matrix} & 
\begin{matrix} x_2 \\ (ijkm) \end{matrix} & 
\begin{matrix} x_3 \\ (jlkm) \end{matrix} & 
\begin{matrix} x_4 \\ (ijlm) \end{matrix} & 
\begin{matrix} x_5 \\ (ilkm) \end{matrix} & x_1 \circ x_2 & x_1 \circ x_3 & x_2 \circ x_3 & x_5 \circ x_4\\\hline
\begin{smallmatrix} i=1 \\ j=2 \\ k=3 \end{smallmatrix} & 
\begin{matrix} \text{(G3)} \\ 0 \end{matrix} &  \begin{matrix} \text{(G1)} \\ 0 \end{matrix} & 
\begin{matrix} \text{(C3)} \\ 1 \end{matrix} &  \begin{matrix} \text{(A1)} \\ 1 \end{matrix} &
\begin{matrix} \text{(C1)} \\ 0 \end{matrix} & 0 & 0 & 1 & 1\\ \hline
\begin{smallmatrix} i=1 \\ j=2 \\ k\ge 4 \end{smallmatrix} & 
\begin{matrix} \text{(G3)} \\ 0 \end{matrix} &  \begin{matrix} \text{(A1)} \\ 1 \end{matrix} & 
\begin{matrix} \text{(C3)} \\ 1 \end{matrix} &  \begin{matrix} \text{(A1)} \\ 1 \end{matrix} &
\begin{matrix} \text{(C2)} \\ 1 \end{matrix} & 0 & 0 & 1 & 1\\ \hline
\begin{smallmatrix} i=1 \\ j = 3 \end{smallmatrix} & 
\begin{matrix} \text{(G3)} \\ 0 \end{matrix} &  \begin{matrix} \text{(A2)} \\ 1 \end{matrix} & 
\begin{matrix} \text{(C4)} \\ 0 \end{matrix} &  \begin{matrix} \text{(A2)} \\ 1 \end{matrix} &
\begin{matrix} \text{(C2)} \\ 1 \end{matrix} & 0 & 1 & 0 & 0\\ \hline
\begin{smallmatrix}i=1 \\ j \ge 4 \end{smallmatrix}& 
\begin{matrix} \text{(G3)} \\ 0 \end{matrix} &  \begin{matrix} \text{(A2)} \\ 1 \end{matrix} & 
\begin{matrix} \text{(C5)} \\ 0 \end{matrix} &  \begin{matrix} \text{(A2)} \\1 \end{matrix} &
\begin{matrix} \text{(C2)} \\ 1 \end{matrix} & 0 & 1 & 0 & 0\\ \hline
\begin{smallmatrix}i=2 \\ j =3 \end{smallmatrix}& 
\begin{matrix} \text{(B1)} \\ 0 \end{matrix} &  \begin{matrix} \text{(A3)} \\ 0 \end{matrix} & 
\begin{matrix} \text{(C4)} \\ 0 \end{matrix} &  \begin{matrix} \text{(A3)} \\ 0 \end{matrix} &
\begin{matrix} \text{(C3)} \\ 1 \end{matrix} & 1 & 0 & 0 & 0\\ \hline
\begin{smallmatrix}i=2 \\ j \ge 4 \end{smallmatrix}& 
\begin{matrix} \text{(B2)} \\ 0 \end{matrix} &  \begin{matrix} \text{(A3)} \\ 0 \end{matrix} & 
\begin{matrix} \text{(C5)} \\ 0 \end{matrix} &  \begin{matrix} \text{(A3)} \\ 0 \end{matrix} &
\begin{matrix} \text{(C3)} \\ 1 \end{matrix} & 1 & 0 & 0 & 0\\ \hline
\begin{smallmatrix}i = 3 \end{smallmatrix}& 
\begin{matrix} \text{(B3)} \\ 0 \end{matrix} &  \begin{matrix} \text{(A4)} \\ 1 \end{matrix} & 
\begin{matrix} \text{(C5)} \\ 0 \end{matrix} &  \begin{matrix} \text{(A4)} \\ 1 \end{matrix} &
\begin{matrix} \text{(C4)} \\ 0 \end{matrix} & 1 & 1 & 1 & 1\\ \hline
\begin{smallmatrix}i \ge 4 \end{smallmatrix}& 
\begin{matrix} \text{(B3)} \\ 0 \end{matrix} &  \begin{matrix} \text{(A4)} \\ 1 \end{matrix} & 
\begin{matrix} \text{(C5)} \\ 0 \end{matrix} &  \begin{matrix} \text{(A4)} \\ 1 \end{matrix} &
\begin{matrix} \text{(C5)} \\ 0 \end{matrix} & 1 & 1 & 1 & 1\\ \hline
\end{array}
\] 

(4) Pentagon relation for $\{i, j, l, m, k\}$: 
\[\begin{array}{|l|ccccc|cccc|}
\hline
& \begin{matrix} x_1 \\ (ijlm) \end{matrix} & 
\begin{matrix} x_2 \\ (ijmk) \end{matrix} & 
\begin{matrix} x_3 \\ (jkml) \end{matrix} & 
\begin{matrix} x_4 \\ (ijlk) \end{matrix} & 
\begin{matrix} x_5 \\ (ikml) \end{matrix} & x_1 \circ x_2 & x_1 \circ x_3 & x_2 \circ x_3 & x_5 \circ x_4\\\hline
\begin{smallmatrix} i=1 \\ j=2 \\ k=3 \end{smallmatrix} & 
\begin{matrix} \text{(A1)} \\ 1 \end{matrix} &  \begin{matrix} \text{(G3)} \\ 0 \end{matrix} & 
\begin{matrix} \text{(B1)} \\ 0 \end{matrix} &  \begin{matrix} \text{(G3)} \\ 0 \end{matrix} &
\begin{matrix} \text{(G3)} \\ 0 \end{matrix} & 1 & 0 & 1 & 1\\ \hline
\begin{smallmatrix} i=1 \\ j=2 \\ k\ge 4 \end{smallmatrix} & 
\begin{matrix} \text{(A1)} \\ 1 \end{matrix} &  \begin{matrix} \text{(G3)} \\ 0 \end{matrix} & 
\begin{matrix} \text{(B2)} \\ 0  \end{matrix} &  \begin{matrix} \text{(G3)} \\ 0 \end{matrix} &
\begin{matrix} \text{(G3)} \\ 0 \end{matrix} & 1 & 0 & 1 & 1\\ \hline
\begin{smallmatrix} i=1 \\ j \ge 3\end{smallmatrix} & 
\begin{matrix} \text{(A2)} \\ 1 \end{matrix} &  \begin{matrix} \text{(G3)} \\ 0 \end{matrix} & 
\begin{matrix} \text{(B3)} \\ 0 \end{matrix} &  \begin{matrix} \text{(G3)} \\ 0 \end{matrix} &
\begin{matrix} \text{(G3)} \\ 0 \end{matrix} & 1 & 0 & 1 & 1\\ \hline
\begin{smallmatrix}i=2 \\ j =3 \end{smallmatrix}& 
\begin{matrix} \text{(A3)} \\ 0 \end{matrix} &  \begin{matrix} \text{(B1)} \\ 0 \end{matrix} & 
\begin{matrix} \text{(B3)} \\ 0 \end{matrix} &  \begin{matrix} \text{(B1)} \\ 0 \end{matrix} &
\begin{matrix} \text{(B2)} \\ 0 \end{matrix} & 1 & 1 & 0 & 0\\ \hline
\begin{smallmatrix}i=2 \\ j \ge 4 \end{smallmatrix}& 
\begin{matrix} \text{(A3)} \\ 0 \end{matrix} &  \begin{matrix} \text{(B2)} \\ 0 \end{matrix} & 
\begin{matrix} \text{(B3)} \\ 0 \end{matrix} &  \begin{matrix} \text{(B2)} \\ 0 \end{matrix} &
\begin{matrix} \text{(B2)} \\ 0 \end{matrix} & 1 & 1 & 0 & 0\\ \hline
\begin{smallmatrix}i \ge  3 \end{smallmatrix}& 
\begin{matrix} \text{(A4)} \\ 1 \end{matrix} &  \begin{matrix} \text{(B3)} \\ 0 \end{matrix} & 
\begin{matrix} \text{(B3)} \\ 0 \end{matrix} &  \begin{matrix} \text{(B3)} \\ 0 \end{matrix} &
\begin{matrix} \text{(B3)} \\ 0 \end{matrix} & 0 & 0 & 1 & 0\\ \hline
\end{array}
\] 

(5) Pentagon relation for $\{i, j, m, k, l\}$: 
\[\begin{array}{|l|ccccc|cccc|}
\hline
& \begin{matrix} x_1 \\ (ijmk) \end{matrix} & 
\begin{matrix} x_2 \\ (ijkl) \end{matrix} & 
\begin{matrix} x_3 \\ (jlkm) \end{matrix} & 
\begin{matrix} x_4 \\ (ijml) \end{matrix} & 
\begin{matrix} x_5 \\ (ilkm) \end{matrix} & x_1 \circ x_2 & x_1 \circ x_3 & x_2 \circ x_3 & x_5 \circ x_4\\\hline
\begin{smallmatrix} i=1 \\ j=2 \\ k=3 \end{smallmatrix} & 
\begin{matrix} \text{(G3)} \\ 0 \end{matrix} &  \begin{matrix} \text{(G1)} \\ 0 \end{matrix} & 
\begin{matrix} \text{(C3)} \\ 1 \end{matrix} &  \begin{matrix} \text{(G3)} \\ 0 \end{matrix} &
\begin{matrix} \text{(C1)} \\ 0 \end{matrix} & 1 & 1 & 0 & 1 \\ \hline
\begin{smallmatrix} i=1 \\ j=2 \\ k\ge 4 \end{smallmatrix} & 
\begin{matrix} \text{(G3)} \\ 0 \end{matrix} &  \begin{matrix} \text{(A1)} \\ 1 \end{matrix} & 
\begin{matrix} \text{(C3)} \\ 1 \end{matrix} &  \begin{matrix} \text{(G3)} \\ 0 \end{matrix} &
\begin{matrix} \text{(C2)} \\ 1 \end{matrix} & 1 & 1 & 0 & 1\\ \hline
\begin{smallmatrix} i=1 \\ j = 3 \end{smallmatrix} & 
\begin{matrix} \text{(G3)} \\ 0 \end{matrix} &  \begin{matrix} \text{(A2)} \\ 1 \end{matrix} & 
\begin{matrix} \text{(C4)} \\ 0 \end{matrix} &  \begin{matrix} \text{(G3)} \\ 0 \end{matrix} &
\begin{matrix} \text{(C2)} \\ 1 \end{matrix} & 1 & 1 & 0 & 0\\ \hline
\begin{smallmatrix}i=1 \\ j \ge 4 \end{smallmatrix}& 
\begin{matrix} \text{(G3)} \\ 0 \end{matrix} &  \begin{matrix} \text{(A2)} \\ 1 \end{matrix} & 
\begin{matrix} \text{(C5)} \\ 0 \end{matrix} &  \begin{matrix} \text{(G3)} \\ 0 \end{matrix} &
\begin{matrix} \text{(C2)} \\ 1 \end{matrix} & 1 & 1 & 0 & 0\\ \hline
\begin{smallmatrix}i=2 \\ j =3 \end{smallmatrix}& 
\begin{matrix} \text{(B1)} \\ 0 \end{matrix} &  \begin{matrix} \text{(A3)} \\ 0 \end{matrix} & 
\begin{matrix} \text{(C4)} \\ 0 \end{matrix} &  \begin{matrix} \text{(B1)} \\ 0 \end{matrix} &
\begin{matrix} \text{(C3)} \\ 1 \end{matrix} & 0 & 1 & 1 & 1\\ \hline
\begin{smallmatrix}i=2 \\ j \ge 4 \end{smallmatrix}& 
\begin{matrix} \text{(B2)} \\ 0 \end{matrix} &  \begin{matrix} \text{(A3)} \\ 0 \end{matrix} & 
\begin{matrix} \text{(C5)} \\ 0 \end{matrix} &  \begin{matrix} \text{(B2)} \\ 0 \end{matrix} &
\begin{matrix} \text{(C3)} \\ 1 \end{matrix} & 0 & 1 & 1 & 1\\ \hline
\begin{smallmatrix}i = 3 \end{smallmatrix}& 
\begin{matrix} \text{(B3)} \\ 0 \end{matrix} &  \begin{matrix} \text{(A4)} \\ 1 \end{matrix} & 
\begin{matrix} \text{(C5)} \\ 0 \end{matrix} &  \begin{matrix} \text{(B3)} \\ 0 \end{matrix} &
\begin{matrix} \text{(C4)} \\ 0 \end{matrix} & 0 & 0 & 1 & 0\\ \hline
\begin{smallmatrix}i \ge 4 \end{smallmatrix}& 
\begin{matrix} \text{(B3)} \\ 0 \end{matrix} &  \begin{matrix} \text{(A4)} \\ 1 \end{matrix} & 
\begin{matrix} \text{(C5)} \\ 0 \end{matrix} &  \begin{matrix} \text{(B3)} \\ 0 \end{matrix} &
\begin{matrix} \text{(C5)} \\ 0 \end{matrix} & 0 & 0 & 1 & 0\\ \hline
\end{array}
\] 

(6) Pentagon relation for $\{i, j, m, l, k\}$: 
\[\begin{array}{|l|ccccc|cccc|}
\hline
& \begin{matrix} x_1 \\ (ijml) \end{matrix} & 
\begin{matrix} x_2 \\ (ijlk) \end{matrix} & 
\begin{matrix} x_3 \\ (jklm) \end{matrix} & 
\begin{matrix} x_4 \\ (ijmk) \end{matrix} & 
\begin{matrix} x_5 \\ (iklm) \end{matrix} & x_1 \circ x_2 & x_1 \circ x_3 & x_2 \circ x_3 & x_5 \circ x_4\\\hline
\begin{smallmatrix} i=1 \\ j=2 \\ k=3 \end{smallmatrix} & 
\begin{matrix} \text{(G3)} \\ 0 \end{matrix} &  \begin{matrix} \text{(G3)} \\ 0 \end{matrix} & 
\begin{matrix} \text{(A3)} \\ 0 \end{matrix} &  \begin{matrix} \text{(G3)} \\ 0 \end{matrix} &
\begin{matrix} \text{(A2)} \\ 1 \end{matrix} & 1 & 1 & 0 & 1\\ \hline
\begin{smallmatrix} i=1 \\ j \ge 3 \end{smallmatrix} & 
\begin{matrix} \text{(G3)} \\ 0 \end{matrix} &  \begin{matrix} \text{(G3)} \\ 0 \end{matrix} & 
\begin{matrix} \text{(A4)} \\ 1 \end{matrix} &  \begin{matrix} \text{(G3)} \\ 0 \end{matrix} &
\begin{matrix} \text{(A2)} \\ 1 \end{matrix} & 1 & 0 & 1 & 0\\ \hline
\begin{smallmatrix}i=2 \\ j =3 \end{smallmatrix}& 
\begin{matrix} \text{(B1)} \\ 0 \end{matrix} &  \begin{matrix} \text{(B1)} \\ 0 \end{matrix} & 
\begin{matrix} \text{(A4)} \\ 1 \end{matrix} &  \begin{matrix} \text{(B1)} \\ 0 \end{matrix} &
\begin{matrix} \text{(A3)} \\ 0 \end{matrix} & 1 & 1 & 1 & 0\\ \hline
\begin{smallmatrix}i=2 \\ j \ge 4 \end{smallmatrix}& 
\begin{matrix} \text{(B2)} \\ 0 \end{matrix} &  \begin{matrix} \text{(B2)} \\ 0 \end{matrix} & 
\begin{matrix} \text{(A4)} \\ 1 \end{matrix} &  \begin{matrix} \text{(B2)} \\ 0 \end{matrix} &
\begin{matrix} \text{(A3)} \\ 0 \end{matrix} & 1 & 1 & 1 & 0\\ \hline
\begin{smallmatrix}i \ge  3 \end{smallmatrix}& 
\begin{matrix} \text{(B3)} \\ 0 \end{matrix} &  \begin{matrix} \text{(B3)} \\ 0 \end{matrix} & 
\begin{matrix} \text{(A4)} \\ 1 \end{matrix} &  \begin{matrix} \text{(B3)} \\ 0 \end{matrix} &
\begin{matrix} \text{(A4)} \\ 1 \end{matrix} & 1 & 0 & 1 & 0\\ \hline
\end{array}
\] 

(7) Pentagon relation for $\{i, k, j, l, m\}$: 
\[\begin{array}{|l|ccccc|cccc|}
\hline
& \begin{matrix} x_1 \\ (ikjl) \end{matrix} & 
\begin{matrix} x_2 \\ (iklm) \end{matrix} & 
\begin{matrix} x_3 \\ (jkml) \end{matrix} & 
\begin{matrix} x_4 \\ (ikjm) \end{matrix} & 
\begin{matrix} x_5 \\ (ijlm) \end{matrix} & x_1 \circ x_2 & x_1 \circ x_3 & x_2 \circ x_3 & x_5 \circ x_4\\\hline
\begin{smallmatrix} i=1 \\ j=2 \\ k=3 \end{smallmatrix} & 
\begin{matrix} \text{(G2)} \\ 0 \end{matrix} &  \begin{matrix} \text{(A2)} \\ 1 \end{matrix} & 
\begin{matrix} \text{(B1)} \\ 0 \end{matrix} &  \begin{matrix} \text{(G2)} \\ 0 \end{matrix} &
\begin{matrix} \text{(A1)} \\ 1 \end{matrix} & 0 & 0 & 0 & 0\\ \hline
\begin{smallmatrix} i=1 \\ j=2 \\ k\ge 4 \end{smallmatrix} & 
\begin{matrix} \text{(G2)} \\ 0 \end{matrix} &  \begin{matrix} \text{(A2)} \\ 1 \end{matrix} & 
\begin{matrix} \text{(B2)} \\ 0 \end{matrix} &  \begin{matrix} \text{(G2)} \\ 0 \end{matrix} &
\begin{matrix} \text{(A1)} \\ 1 \end{matrix} & 0 & 0 & 0 & 0\\ \hline
\begin{smallmatrix} i=1 \\ j = 3 \end{smallmatrix} & 
\begin{matrix} \text{(C1)} \\ 0 \end{matrix} &  \begin{matrix} \text{(A2)} \\ 1 \end{matrix} & 
\begin{matrix} \text{(B3)} \\ 0 \end{matrix} &  \begin{matrix} \text{(C1)} \\ 0 \end{matrix} &
\begin{matrix} \text{(A2)} \\ 1 \end{matrix} & 1 & 0 & 1 & 0\\ \hline
\begin{smallmatrix}i=1 \\ j \ge 4 \end{smallmatrix}& 
\begin{matrix} \text{(C2)} \\ 1 \end{matrix} &  \begin{matrix} \text{(A2)} \\ 1 \end{matrix} & 
\begin{matrix} \text{(B3)} \\ 0 \end{matrix} &  \begin{matrix} \text{(C2)} \\ 1 \end{matrix} &
\begin{matrix} \text{(A2)} \\ 1 \end{matrix} & 1 & 0 & 1 & 0\\ \hline
\begin{smallmatrix}i=2 \end{smallmatrix}& 
\begin{matrix} \text{(C3)} \\ 1 \end{matrix} &  \begin{matrix} \text{(A3)} \\ 0 \end{matrix} & 
\begin{matrix} \text{(B3)} \\ 0 \end{matrix} &  \begin{matrix} \text{(C3)} \\ 1 \end{matrix} &
\begin{matrix} \text{(A3)} \\ 0 \end{matrix} & 0 & 1 & 0 & 1\\ \hline
\begin{smallmatrix}i = 3 \end{smallmatrix}& 
\begin{matrix} \text{(C4)} \\ 0 \end{matrix} &  \begin{matrix} \text{(A4)} \\ 1 \end{matrix} & 
\begin{matrix} \text{(B3)} \\ 0 \end{matrix} &  \begin{matrix} \text{(C4)} \\ 0 \end{matrix} &
\begin{matrix} \text{(A4)} \\ 1 \end{matrix} & 0 & 1 & 1 & 0\\ \hline
\begin{smallmatrix}i \ge 4 \end{smallmatrix}& 
\begin{matrix} \text{(C5)} \\ 0 \end{matrix} &  \begin{matrix} \text{(A4)} \\ 1 \end{matrix} & 
\begin{matrix} \text{(B3)} \\ 0 \end{matrix} &  \begin{matrix} \text{(C5)} \\ 0 \end{matrix} &
\begin{matrix} \text{(A4)} \\ 1 \end{matrix} & 0 & 1 & 1 & 0\\ \hline
\end{array}
\] 

(8) Pentagon relation for $\{i, k, j, m, l\}$: 
\[\begin{array}{|l|ccccc|cccc|}
\hline
& \begin{matrix} x_1 \\ (ikjm) \end{matrix} & 
\begin{matrix} x_2 \\ (ikml) \end{matrix} & 
\begin{matrix} x_3 \\ (jklm) \end{matrix} & 
\begin{matrix} x_4 \\ (ikjl) \end{matrix} & 
\begin{matrix} x_5 \\ (ijml) \end{matrix} & x_1 \circ x_2 & x_1 \circ x_3 & x_2 \circ x_3 & x_5 \circ x_4\\\hline
\begin{smallmatrix} i=1 \\ j=2 \end{smallmatrix} & 
\begin{matrix} \text{(G2)} \\ 0 \end{matrix} &  \begin{matrix} \text{(G3)} \\ 0 \end{matrix} & 
\begin{matrix} \text{(A3)} \\ 0 \end{matrix} &  \begin{matrix} \text{(G2)} \\ 0 \end{matrix} &
\begin{matrix} \text{(G3)} \\ 0 \end{matrix} & 0 & 0 & 1 & 1\\ \hline
\begin{smallmatrix} i=1 \\ j=3 \end{smallmatrix} & 
\begin{matrix} \text{(C1)} \\ 0 \end{matrix} &  \begin{matrix} \text{(G3)} \\ 0 \end{matrix} & 
\begin{matrix} \text{(A4)} \\ 1 \end{matrix} &  \begin{matrix} \text{(C1)} \\ 0 \end{matrix} &
\begin{matrix} \text{(G3)} \\ 0 \end{matrix} & 0 & 1 & 1 & 1\\ \hline
\begin{smallmatrix} i=1 \\ j \ge 4 \end{smallmatrix} & 
\begin{matrix} \text{(C2)} \\ 1 \end{matrix} &  \begin{matrix} \text{(G3)} \\ 0 \end{matrix} & 
\begin{matrix} \text{(A4)} \\ 1 \end{matrix} &  \begin{matrix} \text{(C2)} \\ 1 \end{matrix} &
\begin{matrix} \text{(G3)} \\ 0 \end{matrix} & 0 & 1 & 1 & 1\\ \hline
\begin{smallmatrix}i=2 \\ j =3 \end{smallmatrix}& 
\begin{matrix} \text{(C3)} \\ 1 \end{matrix} &  \begin{matrix} \text{(B2)} \\ 0 \end{matrix} & 
\begin{matrix} \text{(A4)} \\ 1 \end{matrix} &  \begin{matrix} \text{(C3)} \\ 1 \end{matrix} &
\begin{matrix} \text{(B1)} \\ 0 \end{matrix} & 1 & 1 & 0 & 1\\ \hline
\begin{smallmatrix}i=2 \\ j \ge 4\end{smallmatrix}& 
\begin{matrix} \text{(C3)} \\ 1 \end{matrix} &  \begin{matrix} \text{(B2)} \\ 0 \end{matrix} & 
\begin{matrix} \text{(A4)} \\ 1 \end{matrix} &  \begin{matrix} \text{(C3)} \\ 1 \end{matrix} &
\begin{matrix} \text{(B2)} \\ 0 \end{matrix} & 1 & 1 & 0 & 1\\ \hline
\begin{smallmatrix}i = 3 \end{smallmatrix}& 
\begin{matrix} \text{(C4)} \\ 0 \end{matrix} &  \begin{matrix} \text{(B3)} \\ 0 \end{matrix} & 
\begin{matrix} \text{(A4)} \\ 1 \end{matrix} &  \begin{matrix} \text{(C4)} \\ 0 \end{matrix} &
\begin{matrix} \text{(B3)} \\ 0 \end{matrix} & 0 & 0 & 1 & 0\\ \hline
\begin{smallmatrix}i \ge 4 \end{smallmatrix}& 
\begin{matrix} \text{(C5)} \\ 0 \end{matrix} &  \begin{matrix} \text{(B3)} \\ 0 \end{matrix} & 
\begin{matrix} \text{(A4)} \\ 1 \end{matrix} &  \begin{matrix} \text{(C5)} \\ 0 \end{matrix} &
\begin{matrix} \text{(B3)} \\ 0 \end{matrix} & 0 & 0 & 1 & 0\\ \hline
\end{array}
\] 

(9) Pentagon relation for $\{i, k, l, j, m\}$: 
\[\begin{array}{|l|ccccc|cccc|}
\hline
& \begin{matrix} x_1 \\ (ijlk) \end{matrix} & 
\begin{matrix} x_2 \\ (ikjm) \end{matrix} & 
\begin{matrix} x_3 \\ (jlkm) \end{matrix} & 
\begin{matrix} x_4 \\ (iklm) \end{matrix} & 
\begin{matrix} x_5 \\ (iljm) \end{matrix} & x_1 \circ x_2 & x_1 \circ x_3 & x_2 \circ x_3 & x_5 \circ x_4\\\hline
\begin{smallmatrix} i=1 \\ j=2 \end{smallmatrix} & 
\begin{matrix} \text{(G3)} \\ 0 \end{matrix} &  \begin{matrix} \text{(G2)} \\ 0 \end{matrix} & 
\begin{matrix} \text{(C3)} \\ 1 \end{matrix} &  \begin{matrix} \text{(A2)} \\ 1 \end{matrix} &
\begin{matrix} \text{(G2)} \\ 0 \end{matrix} & 0 & 0 & 1 & 1\\ \hline
\begin{smallmatrix} i=1 \\ j=3 \end{smallmatrix} & 
\begin{matrix} \text{(G3)} \\ 0 \end{matrix} &  \begin{matrix} \text{(C1)} \\ 0 \end{matrix} & 
\begin{matrix} \text{(C4)} \\ 0 \end{matrix} &  \begin{matrix} \text{(A2)} \\ 1 \end{matrix} &
\begin{matrix} \text{(C1)} \\ 0 \end{matrix} & 0 & 1 & 1 & 1\\ \hline
\begin{smallmatrix} i=1 \\ j \ge 4 \end{smallmatrix} & 
\begin{matrix} \text{(G3)} \\ 0 \end{matrix} &  \begin{matrix} \text{(C2)} \\ 1 \end{matrix} & 
\begin{matrix} \text{(C5)} \\ 0 \end{matrix} &  \begin{matrix} \text{(A2)} \\ 1 \end{matrix} &
\begin{matrix} \text{(C2)} \\ 1 \end{matrix} & 0 & 1 & 1 & 1\\ \hline
\begin{smallmatrix}i=2 \\ j =3 \end{smallmatrix}& 
\begin{matrix} \text{(B1)} \\ 0 \end{matrix} &  \begin{matrix} \text{(C3)} \\ 1 \end{matrix} & 
\begin{matrix} \text{(C4)} \\ 0 \end{matrix} &  \begin{matrix} \text{(A3)} \\ 0 \end{matrix} &
\begin{matrix} \text{(C3)} \\ 1 \end{matrix} & 1 & 0 & 1 & 0\\ \hline
\begin{smallmatrix}i=2 \\ j \ge 4\end{smallmatrix}& 
\begin{matrix} \text{(B2)} \\ 0 \end{matrix} &  \begin{matrix} \text{(C3)} \\ 1 \end{matrix} & 
\begin{matrix} \text{(C5)} \\ 0 \end{matrix} &  \begin{matrix} \text{(A3)} \\ 0 \end{matrix} &
\begin{matrix} \text{(C3)} \\ 1 \end{matrix} & 1 & 0 & 1 & 0\\ \hline
\begin{smallmatrix}i = 3 \end{smallmatrix}& 
\begin{matrix} \text{(B3)} \\ 0 \end{matrix} &  \begin{matrix} \text{(C4)} \\ 0 \end{matrix} & 
\begin{matrix} \text{(C5)} \\ 0 \end{matrix} &  \begin{matrix} \text{(A4)} \\ 1 \end{matrix} &
\begin{matrix} \text{(C4)} \\ 0 \end{matrix} & 1 & 1 & 1 & 0\\ \hline
\begin{smallmatrix}i \ge 4 \end{smallmatrix}& 
\begin{matrix} \text{(B3)} \\ 0 \end{matrix} &  \begin{matrix} \text{(C5)} \\ 0 \end{matrix} & 
\begin{matrix} \text{(C5)} \\ 0 \end{matrix} &  \begin{matrix} \text{(A4)} \\ 1 \end{matrix} &
\begin{matrix} \text{(C5)} \\ 0 \end{matrix} & 1 & 1 & 1 & 0\\ \hline
\end{array}
\] 

(10) Pentagon relation for $\{i, k, m, j, l\}$: 
\[\begin{array}{|l|ccccc|cccc|}
\hline
& \begin{matrix} x_1 \\ (ijlk) \end{matrix} & 
\begin{matrix} x_2 \\ (ikjm) \end{matrix} & 
\begin{matrix} x_3 \\ (jlkm) \end{matrix} & 
\begin{matrix} x_4 \\ (iklm) \end{matrix} & 
\begin{matrix} x_5 \\ (iljm) \end{matrix} & x_1 \circ x_2 & x_1 \circ x_3 & x_2 \circ x_3 & x_5 \circ x_4\\\hline
\begin{smallmatrix} i=1 \\ j=2 \end{smallmatrix} & 
\begin{matrix} \text{(G3)} \\ 0 \end{matrix} &  \begin{matrix} \text{(G2)} \\ 0 \end{matrix} & 
\begin{matrix} \text{(C3)} \\ 1 \end{matrix} &  \begin{matrix} \text{(G3)} \\ 0 \end{matrix} &
\begin{matrix} \text{(G2)} \\ 0 \end{matrix} & 1 & 1 & 0 & 1\\ \hline
\begin{smallmatrix} i=1 \\ j=3 \end{smallmatrix} & 
\begin{matrix} \text{(G3)} \\ 0 \end{matrix} &  \begin{matrix} \text{(C1)} \\ 0 \end{matrix} & 
\begin{matrix} \text{(C4)} \\ 0 \end{matrix} &  \begin{matrix} \text{(G3)} \\ 0 \end{matrix} &
\begin{matrix} \text{(C1)} \\ 0 \end{matrix} & 1 & 1 & 1 & 1\\ \hline
\begin{smallmatrix} i=1 \\ j \ge 4 \end{smallmatrix} & 
\begin{matrix} \text{(G3)} \\ 0 \end{matrix} &  \begin{matrix} \text{(C2)} \\ 1 \end{matrix} & 
\begin{matrix} \text{(C5)} \\ 0 \end{matrix} &  \begin{matrix} \text{(G3)} \\ 0 \end{matrix} &
\begin{matrix} \text{(C2)} \\ 1 \end{matrix} & 1 & 1 & 1 & 1\\ \hline
\begin{smallmatrix}i=2 \\ j =3 \end{smallmatrix}& 
\begin{matrix} \text{(B1)} \\ 0 \end{matrix} &  \begin{matrix} \text{(C3)} \\ 1 \end{matrix} & 
\begin{matrix} \text{(C4)} \\ 0 \end{matrix} &  \begin{matrix} \text{(B2)} \\ 0 \end{matrix} &
\begin{matrix} \text{(C3)} \\ 1 \end{matrix} & 0 & 1 & 0 & 1\\ \hline
\begin{smallmatrix}i=2 \\ j \ge 4\end{smallmatrix}& 
\begin{matrix} \text{(B2)} \\ 0 \end{matrix} &  \begin{matrix} \text{(C3)} \\ 1 \end{matrix} & 
\begin{matrix} \text{(C5)} \\ 0 \end{matrix} &  \begin{matrix} \text{(B2)} \\ 0 \end{matrix} &
\begin{matrix} \text{(C3)} \\ 1 \end{matrix} & 0 & 1 & 0 & 1\\ \hline
\begin{smallmatrix}i = 3 \end{smallmatrix}& 
\begin{matrix} \text{(B3)} \\ 0 \end{matrix} &  \begin{matrix} \text{(C4)} \\ 0 \end{matrix} & 
\begin{matrix} \text{(C5)} \\ 0 \end{matrix} &  \begin{matrix} \text{(B3)} \\ 0 \end{matrix} &
\begin{matrix} \text{(C4)} \\ 0 \end{matrix} & 0 & 0 & 1 & 1\\ \hline
\begin{smallmatrix}i \ge 4 \end{smallmatrix}& 
\begin{matrix} \text{(B3)} \\ 0 \end{matrix} &  \begin{matrix} \text{(C5)} \\ 0 \end{matrix} & 
\begin{matrix} \text{(C5)} \\ 0 \end{matrix} &  \begin{matrix} \text{(B3)} \\ 0 \end{matrix} &
\begin{matrix} \text{(C5)} \\ 0 \end{matrix} & 0 & 0 & 1 & 1\\ \hline
\end{array}
\] 

(11) Pentagon relation for $\{i, l, j, k, m\}$: 
\[\begin{array}{|l|ccccc|cccc|}
\hline
& \begin{matrix} x_1 \\ (ikjl) \end{matrix} & 
\begin{matrix} x_2 \\ (ilkm) \end{matrix} & 
\begin{matrix} x_3 \\ (jkml) \end{matrix} & 
\begin{matrix} x_4 \\ (iljm) \end{matrix} & 
\begin{matrix} x_5 \\ (ijkm) \end{matrix} & x_1 \circ x_2 & x_1 \circ x_3 & x_2 \circ x_3 & x_5 \circ x_4\\\hline
\begin{smallmatrix} i=1 \\ j=2 \\ k=3\end{smallmatrix} & 
\begin{matrix} \text{(G2)} \\ 0 \end{matrix} &  \begin{matrix} \text{(C1)} \\ 0 \end{matrix} & 
\begin{matrix} \text{(B1)} \\ 0 \end{matrix} &  \begin{matrix} \text{(G2)} \\ 0 \end{matrix} &
\begin{matrix} \text{(G1)} \\ 0 \end{matrix} & 0 & 0 & 0 & 0\\ \hline
\begin{smallmatrix} i=1 \\ j=2 \\ k \ge 4 \end{smallmatrix} & 
\begin{matrix} \text{(G2)} \\ 0 \end{matrix} &  \begin{matrix} \text{(C2)} \\ 1 \end{matrix} & 
\begin{matrix} \text{(B2)} \\ 0 \end{matrix} &  \begin{matrix} \text{(G2)} \\ 0 \end{matrix} &
\begin{matrix} \text{(A1)} \\ 1 \end{matrix} & 0 & 0 & 0 & 0\\ \hline
\begin{smallmatrix} i=1 \\ j=3 \end{smallmatrix} & 
\begin{matrix} \text{(C1)} \\ 0 \end{matrix} &  \begin{matrix} \text{(C2)} \\ 1 \end{matrix} & 
\begin{matrix} \text{(B3)} \\ 0 \end{matrix} &  \begin{matrix} \text{(C1)} \\ 0 \end{matrix} &
\begin{matrix} \text{(A2)} \\ 1 \end{matrix} & 1 & 0 & 0 & 1\\ \hline
\begin{smallmatrix}i=1 \\ j \ge 4 \end{smallmatrix}& 
\begin{matrix} \text{(C2)} \\ 1 \end{matrix} &  \begin{matrix} \text{(C2)} \\ 1 \end{matrix} & 
\begin{matrix} \text{(B3)} \\ 0 \end{matrix} &  \begin{matrix} \text{(C2)} \\ 1 \end{matrix} &
\begin{matrix} \text{(A2)} \\ 1 \end{matrix} & 1 & 0 & 0 & 1\\ \hline
\begin{smallmatrix}i=2 \end{smallmatrix}& 
\begin{matrix} \text{(C3)} \\ 1 \end{matrix} &  \begin{matrix} \text{(C3)} \\ 1 \end{matrix} & 
\begin{matrix} \text{(B3)} \\ 0 \end{matrix} &  \begin{matrix} \text{(C3)} \\ 1 \end{matrix} &
\begin{matrix} \text{(A3)} \\ 0 \end{matrix} & 0 & 1 & 1 & 1\\ \hline
\begin{smallmatrix}i = 3 \end{smallmatrix}& 
\begin{matrix} \text{(C4)} \\ 0 \end{matrix} &  \begin{matrix} \text{(C4)} \\ 0 \end{matrix} & 
\begin{matrix} \text{(B3)} \\ 0 \end{matrix} &  \begin{matrix} \text{(C4)} \\ 0 \end{matrix} &
\begin{matrix} \text{(A4)} \\ 1 \end{matrix} & 0 & 1 & 1 & 1\\ \hline
\begin{smallmatrix}i \ge 4 \end{smallmatrix}& 
\begin{matrix} \text{(C5)} \\ 0 \end{matrix} &  \begin{matrix} \text{(C5)} \\ 0 \end{matrix} & 
\begin{matrix} \text{(B3)} \\ 0 \end{matrix} &  \begin{matrix} \text{(C5)} \\ 0 \end{matrix} &
\begin{matrix} \text{(A4)} \\ 1 \end{matrix} & 0 & 1 & 1 & 1\\ \hline
\end{array}
\] 

(12) Pentagon relation for $\{i, l, k, j, m\}$: 
\[\begin{array}{|l|ccccc|cccc|}
\hline
& \begin{matrix} x_1 \\ (ijkl) \end{matrix} & 
\begin{matrix} x_2 \\ (iljm) \end{matrix} & 
\begin{matrix} x_3 \\ (jklm) \end{matrix} & 
\begin{matrix} x_4 \\ (ilkm) \end{matrix} & 
\begin{matrix} x_5 \\ (ikjm) \end{matrix} & x_1 \circ x_2 & x_1 \circ x_3 & x_2 \circ x_3 & x_5 \circ x_4\\\hline
\begin{smallmatrix} i=1 \\ j=2 \\ k=3\end{smallmatrix} & 
\begin{matrix} \text{(G1)} \\ 0 \end{matrix} &  \begin{matrix} \text{(G2)} \\ 0 \end{matrix} & 
\begin{matrix} \text{(A3)} \\ 0 \end{matrix} &  \begin{matrix} \text{(C1)} \\ 0 \end{matrix} &
\begin{matrix} \text{(G2)} \\ 0 \end{matrix} & 0 & 0 & 1 & 1\\ \hline
\begin{smallmatrix} i=1 \\ j=2 \\ k \ge 4 \end{smallmatrix} & 
\begin{matrix} \text{(A1)} \\ 1 \end{matrix} &  \begin{matrix} \text{(G2)} \\ 0 \end{matrix} & 
\begin{matrix} \text{(A3)} \\ 0 \end{matrix} &  \begin{matrix} \text{(C2)} \\ 1 \end{matrix} &
\begin{matrix} \text{(G2)} \\ 0 \end{matrix} & 0 & 0 & 1 & 1\\ \hline
\begin{smallmatrix} i=1 \\ j=3 \end{smallmatrix} & 
\begin{matrix} \text{(A2)} \\ 1 \end{matrix} &  \begin{matrix} \text{(C1)} \\ 0 \end{matrix} & 
\begin{matrix} \text{(A4)} \\ 1 \end{matrix} &  \begin{matrix} \text{(C2)} \\ 1 \end{matrix} &
\begin{matrix} \text{(C1)} \\ 0 \end{matrix} & 0 & 0 & 0 & 1\\ \hline
\begin{smallmatrix}i=1 \\ j \ge 4 \end{smallmatrix}& 
\begin{matrix} \text{(A2)} \\ 1 \end{matrix} &  \begin{matrix} \text{(C2)} \\ 1 \end{matrix} & 
\begin{matrix} \text{(A4)} \\ 1 \end{matrix} &  \begin{matrix} \text{(C2)} \\ 1 \end{matrix} &
\begin{matrix} \text{(C2)} \\ 1 \end{matrix} & 0 & 0 & 0 & 1\\ \hline
\begin{smallmatrix}i=2 \end{smallmatrix}& 
\begin{matrix} \text{(A3)} \\ 0 \end{matrix} &  \begin{matrix} \text{(C3)} \\ 1 \end{matrix} & 
\begin{matrix} \text{(A4)} \\ 1 \end{matrix} &  \begin{matrix} \text{(C3)} \\ 1 \end{matrix} &
\begin{matrix} \text{(C3)} \\ 1 \end{matrix} & 1 & 0 & 1 & 0 \\ \hline
\begin{smallmatrix}i = 3 \end{smallmatrix}& 
\begin{matrix} \text{(A4)} \\ 1 \end{matrix} &  \begin{matrix} \text{(C4)} \\ 0 \end{matrix} & 
\begin{matrix} \text{(A4)} \\ 1 \end{matrix} &  \begin{matrix} \text{(C4)} \\ 0 \end{matrix} &
\begin{matrix} \text{(C4)} \\ 0 \end{matrix} & 1 & 1 & 1 & 1\\ \hline
\begin{smallmatrix}i \ge 4 \end{smallmatrix}& 
\begin{matrix} \text{(A4)} \\ 1 \end{matrix} &  \begin{matrix} \text{(C5)} \\ 0 \end{matrix} & 
\begin{matrix} \text{(A4)} \\ 1 \end{matrix} &  \begin{matrix} \text{(C5)} \\ 0 \end{matrix} &
\begin{matrix} \text{(C5)} \\ 0 \end{matrix} & 1 & 1 & 1 & 1\\ \hline
\end{array}
\] 
This completes the proof of Lemma \ref{lem:relpentagon}. 
\end{proof}

\begin{proof}[Proof of Proposition $\ref{prop:representation}$] 
Lemmas \ref{lem:relinvol}, \ref{lem:relcomm}, \ref{lem:relpentagon} show that our assignment $\mathfrak{e} (X)$ 
to $X \in \mathcal{G}_n^d$ gives a well-defined homomorphism $r \colon \Gamma_n^4 \to 
(\mathbb{Z}/2\mathbb{Z}) \widetilde{\times} H_1 (\Gamma_n^4) \subset \operatorname{GL}_{N_n+2}(\mathbb{Z}/2\mathbb{Z})$. 
Also we have 
\[r(c)=[r((1234)), r((1235))]=((1234) \circ (1235)+ (1235) \circ (1234), 0) = (0+1,0)=(1,0).\]
This completes the proof. 
\end{proof}

\section{A computer-aided proof for Theorem \ref{thm:main}}\label{sec:computer}

This section gives a computer-aided proof for Theorem \ref{thm:main}. Indeed, 
we may partially use computer calculations to prove both Propositions \ref{prop:finitequotient} and \ref{prop:representation}. 
This proof is the one the authors first obtained. 
Here we write it as another proof, which might be useful from a computational group-theoretical point of view. 

We first prove Proposition \ref{prop:representation}. It is a standard task to check 
%all the patterns of the relations 
that the assignment $\mathfrak{e} (X)$ to 
$X \in \mathcal{G}_n^d$ defined in Section \ref{sec:nonabelian} gives a well-defined homomorphism    
$r \colon \Gamma_n^4 \longrightarrow (\mathbb{Z}/2\mathbb{Z}) \widetilde{\times} H_1 (\Gamma_n^4)$ 
for $n=11$. It does not take much time by a (usual and personal) computer. 
This checking process includes the cases where $n=6,7,8,9,10$. 

Note that our choice of the minimal generating set $\Lambda_n$ as well as the total order $\prec$ and 
the classification of the elements in $\mathcal{G}_n^d$ given in Lemma \ref{lem:bymingen} 
are compatible with respect to the natural homomorphism 
\[\Gamma_{11}^4 \longrightarrow \Gamma_n^4, \qquad (stuv) \longmapsto (f(s) f(t) f(u) f(v))\]
which is induced from a monotonically increasing map $f \colon [11] \to [n]$ fixing $\{1, 2, 3\}$. 
Indeed, checking $r([(4567), (89\,10\,11)])=(0,0)$ costs the largest $n=11$. 
Therefore checking only for the case where $n=11$ suffices to check for all the cases where $n \ge 6$, which 
proves Proposition \ref{prop:representation}. It is easily checked that 
\[r([(1234), (1235)])=r([(1234), (1253)])=r([(1234), (1325)])=(1,0),\]
which implies that $[(1234), (1235)]$, $[(1234), (1253)]$ and $[(1234), (1325)]$ are all non-trivial in $\Gamma_n^4$ 
for all $n \ge 6$. 

Next, we prove Proposition \ref{prop:finitequotient}. 
We may use a computer to show that the group $\Gamma_6^4$ is a finite group of order $2^{20}=1,048,576$. 
The authors first did this computation by using GAP, which took about 30 minutes, and then 
implemented the coset enumeration algorithm, following the one displayed in the Handbook \cite{Handbook}, 
to get the coset table for the pair $(\Gamma_6^4, \{1\})$ by Mathematica, 
which took some days. In our previous paper \cite[Theorem 3.2]{DTS}, we showed that 
the order of the abelianization $H_1 (\Gamma_6^4)$ is $2^{19}=524,288$. Hence 
we have $[\Gamma_6^4, \Gamma_6^4] \cong \mathbb{Z}/2\mathbb{Z}$. It immediately follows 
that $c:=[(1234), (1235)]=[(1234), (1253)]=[(1234), (1325)] \in \Gamma_6^4$ gives 
the generator of $[\Gamma_6^4, \Gamma_6^4]$. 
Since the subgroup $[\Gamma_6^4, \Gamma_6^4]$ is characteristic, it is fixed by the action of $\mathfrak{S}_6$. 
In particular, $c$ is an $\mathfrak{S}_6$-invariant element. Then 
the equality $[c, (3456)]=[[(1234), (1235)], (3456)]=1$ together with the $\mathfrak{S}_6$-action shows that 
$c$ is in the center of $\Gamma_6^4$. Proposition \ref{prop:finitequotient} for $n = 6$ holds from this. 
The proof of Lemma \ref{lem:comm4} for $n =6$ becomes easier in this line of arguments. 
The pentagon relation gives
\begin{align*}
(1234)(1243)&=(1234) \cdot (1435)(1245)(2435)(1235)\\
&=c^4(1435)(1245)(2435)(1235) \cdot (1234)=(1243)(1234).
\end{align*}
\noindent
The other cases are similarly checked. 

For general $n \ge 6$, the equalities 
\[[(1234), (1235)]=[(1234), (1236)]=[(1235), (1236)]\]
hold in $\Gamma_n^4$ since they do in $\Gamma_6^4$. 
We set $c=[(1234), (1235)] \in \Gamma_n^4$ again. 
Applying $(6k) \in \mathfrak{S}_n$ for a fixed $k \ge 7$, we have 
\[c=[(1234), (1235)]=[(1234), (123k)]=[(1235), (123k)].\] 
Then applying $(5l) \in \mathfrak{S}_n$ for a fixed $l \ge 7$ with $l \neq k$, we have 
$c=[(1234), (123k)]=[(123l), (123k)]$. Finally, applying $(6l) \in \mathfrak{S}_n$ gives $c=[(1234), (123k)]=[(1236), (123k)]$, 
which shows that $[(123l), (123k)]$ does not depend on the choices of $k, l \ge 4$ with $k \neq l$. 
Since $c=[(1234), (1235)]=[(1234), (1253)]=[(1234), (1263)]$ and 
$c=[(1234), (1235)]=[(1234), (1325)]=[(1234), (1326)]$, we can similarly derive that 
\[c=[(123k), (12l3)]=[(123k), (132l)]\]
for any $k, l \ge 4$ with $k \neq l$. 

Applying the natural homomorphism 
\[\Gamma_6^4 \longrightarrow \Gamma_n^4, \qquad (stuv) \longmapsto (g(s) g(t) g(u) g(v))\]
induced from the map $g(m)=m$ for $m=1,2,3,4,5$ and $g(6)=k \ge 7$ to 
\[c=[(1234), (1235)]=[(1264), (1265)]=[(1264), (1256)]=[(1264), (1625)] \in \Gamma_6^4,\] 
we have 
\[c=[(1234), (1235)]=[(12k4), (12k5)]=[(12k4), (125k)]=[(12k4), (1k25)] \in \Gamma_n^4.\]  
Continuing in this way, we can show Proposition \ref{prop:finitequotient} 
and Lemma \ref{lem:comm4} for $n \ge 7$.

\section{Remarks}\label{sec:remarks}

Finally, we discuss possible future directions of our study. 

\begin{enumerate}
\item Besides the definition of $\Gamma_n^4$ in \cite{KM}, Kim and Manturov 
constructed a homomorphism 
\[P_n \longrightarrow \Gamma_n^4\]
from the pure braid group $P_n$ of $n$ strings to $\Gamma_n^4$ by 
using their geometric background of $\Gamma_n^4$. 
It would be possible to use our explicit description of $\Gamma_n^4$ 
to determine the image of the homomorphism. 
Details will be discussed elsewhere. 
%We will discuss it in a forthcoming paper.  

\item Determine the center of $\Gamma_n^4$. It is strictly bigger than 
$\mathbb{Z}/2\mathbb{Z}$ generated by $c$. 
Indeed, as proved in Lemma \ref{lem:comm4center}, 
the abelian subgroup generated by $(ijkl)(stuv)$ with $|\{i,j,k,l\} \cap \{s,t,u,v\}| =4$ 
is included in the center. 

\item Our description of $\Gamma_n^4$ as a central extension of $H_1 (\Gamma_n^4)$ by $\mathbb{Z}/2\mathbb{Z}$ 
is stable with respect to increasing $n$. This implies that the natural homomorphism  
$\Gamma_n^4 \to \Gamma_{n+1}^4$ is injective for $n \ge 6$. 
Also, the description makes sense even when we take the direct limit $\displaystyle\lim_{n \to \infty} \Gamma_n^4$. 
The geometric meaning of this central extension might be interesting. 

\item When we consider the relationship between braid groups and symmetric groups, or more generally Artin groups and Coxeter groups, it would be natural to 
introduce the following groups, which had been already discussed in \cite[Sections 2 and 4]{DTS}: 

\begin{definition}\label{def:gammahat}
For $n \ge 4$, the group $\widehat{\Gamma_n^4}$ is defined by the following presentation: 

(Generators) $\{(ijkl) \mid \{i,j,k,l\}\subset [n],\ (i,j,k,l\text{: distinct})\}$

(Relations) There are three types of relations: 
\[\begin{array}{ll}
 (2)&  (ijkl)(stuv)=(stuv)(ijkl),\ (|\{i,j,k,l\} \cap \{s,t,u,v\}| \le 2);\\
 (3)' &  (ijkl)(ijlm)(jklm)(ijkm)^{-1}(iklm)^{-1}=1, \ (i,j,k,l,m \text{ distinct});\\
 (4)' &  (ijkl)=(jkli)^{-1}=(lkji)^{-1}.
\end{array}\]
\end{definition}
\noindent
We call the relations $(3)'$ the {\it signed pentagon relations} and 
call the relations $(4)'$ the {\it signed dihedral relations} (see Figure \ref{fig:geom2}). 
We have a natural projection $\widehat{\Gamma_n^4} \twoheadrightarrow \Gamma_n^4$ 
sending $(ijkl) \in \widehat{\Gamma_n^4}$ to $(ijkl) \in \Gamma_n^4$.  
Study the structure of $\widehat{\Gamma_n^4}$ while comparing it with $\Gamma_n^4$.  

\begin{figure}[htbp]
\begin{center}
\includegraphics[width=0.75\textwidth]{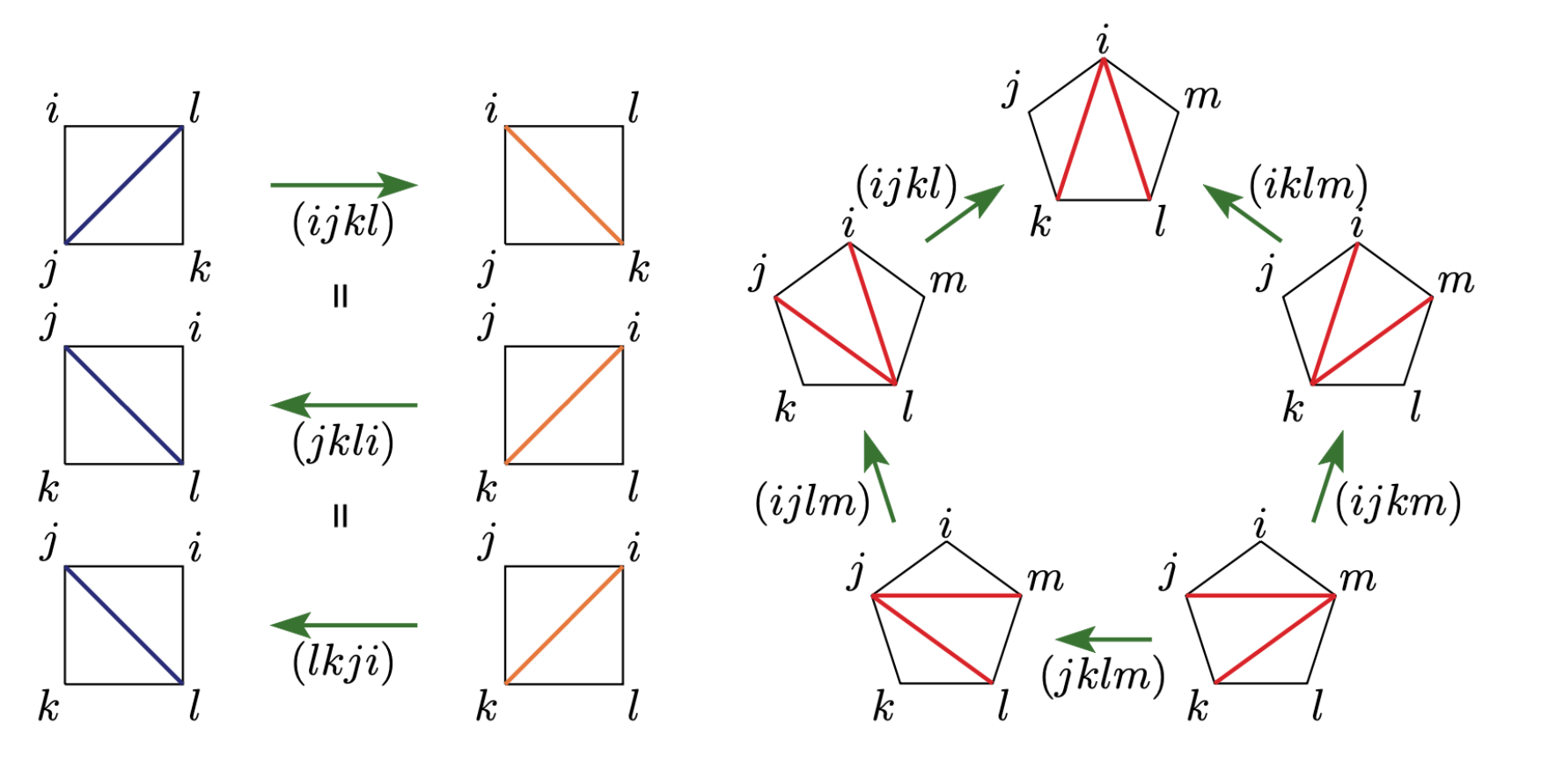}
%\scalebox{1}{\input{figure/20240404_involutive_relation_ver4-2.tex}}\qquad
%\scalebox{1}{\input{figure/20220908_pentagon_relation_ver2_2.tex}}\\ 
\caption{Graphical meaning of the relations of $\widehat{\Gamma_n^4}$}
\label{fig:geom2}
\end{center}
\end{figure}

\item In \cite[Section 5]{DTS}, we showed that $\Gamma_5^4$ is a non-abelian infinite group. 
Studying the structure of $\Gamma_5^4$ remains open. 

\item As seen in Theorem \ref{thm:main}, the commutative relations make $\Gamma_n^4$ for $n \ge 6$ finite groups. 
It might be possible to define more interesting groups by dropping some of the commutative relations. 

\item In \cite[Section 6]{DTS}, we also defined the inclusing-order version $\Delta_n^4$ of $\Gamma_n^4$ and 
gave a minimal generating set. 
Our preliminary computation for $\Delta_6^4$ using the coset table for the pair $(\Delta_6^4, \{1\})$ 
says that the group $\Delta_6^4$ 
is a finite group of order $2^{17}=131,072$. Furthermore we observed that $\Delta_6^4$ 
is a $3$-step nilpotent group. Hence, the natural homomorphism $\Delta_6^4 \to \Gamma_6^4$ is not injective. 
Studying the structure of $\Delta_n^4$ might be interesting. 
\end{enumerate}

%%\noindent
\bigskip
{\it Acknowledgement} \ 
The authors would like to thank Tomotada Ohtsuki and Seiichi Kamada for their organization of 
the international conference ``The 19th East Asian Conference on Geometric Topology'', when the second, third and fourth 
authors had a chance to meet the first author. 
Also, they would like to thank Vassily Olegovich Manturov for helpful comments which 
included informing them about the paper \cite{Styrt}. 
%the referee for helpful comments to improve the paper. 

\bibliographystyle{amsplain}

\begin{thebibliography}{30}
%\bibitem{Bl} S.~R.~Blackburn, 
%\textit{Groups of prime power order with derived subgroup of prime order}, 
%J.\ Algebra 219, 625--657.

\bibitem{Handbook} D.~F.~Holt, B.~Eick, E.~A.~O'Brien, 
\textit{Handbook of computational group theory}, 
Discrete Math. Appl., Chapman \& Hall/CRC, 2005.

\bibitem{KM} S.~Kim, V.~O.~Manturov, 
\textit{Artin's braids, braids for three space, and groups $\Gamma_n^4$ and $G_n^k$}, 
J.\ Knot Theory Ramifications 28 (2019), no. 10, 1950063, 20 pp.

\bibitem{M_book} V.~O.~Manturov, D.~Fedoseev, S.~Kim, I.~Nikonov, 
\textit{Invariants and pictures---low-dimensional topology and combinatorial group theory}, 
Series on Knots and Everything, 66. World Scientific Publishing Co., 2020. 

\bibitem{DTS} T.~Sakasai, Y.~Tadokoro, K.~Tanaka, 
\textit{Minimal generating sets of groups of Kim-Manturov}, 
to appear in the proceedings of the 14th MSJ-SI, 2024, arXiv: 2506.05778 [math.GT].

\bibitem{Sims} C.~C.~Sims, 
\textit{Computation with finitely presented groups}, 
Encyclopedia Math. Appl., 48, Cambridge University Press, 1994.

\bibitem{Styrt} O.~G.~Styrt, 
\textit{Groups $\Gamma_n^4$: algebraic properties}, 
preprint, 2023arXiv: 2309.17317 [math.AG].
\end{thebibliography}

\end{document}